\theoremstyle{definition}
\newtheorem{defn}{Definition}[section]
\newtheorem{rem}[defn]{Remark}
\theoremstyle{plain}
\newtheorem{thm}[defn]{Theorem}
\newtheorem{prop}[defn]{Proposition}
\newtheorem{lem}[defn]{Lemma}
\newtheorem{cor}[defn]{Corollary}
\newtheorem{sublem}[defn]{Sublemma}
\numberwithin{equation}{section}
\title[Uniform hyperbolicity for curve graphs of non-orientable surfaces]{Uniform hyperbolicity for curve graphs of non-orientable surfaces}
\author[E.~Kuno]{Erika Kuno}
\address{
(Erika Kuno)
Department of Mathematics,
Tokyo Institute of Technology,
2-12-1 Ohokayama, Meguro-ku, Tokyo 152-8551, Japan
}
\email{kuno.e.aa@m.titech.ac.jp}
\date{\today}
\begin{document}
\maketitle

\begin{abstract}
Hensel-Przytycki-Webb proved that all curve graphs of orientable surfaces are $17$-hyperbolic. In this paper, we show that curve graphs of non-orientable surfaces are $17$-hyperbolic by applying  Hensel-Przytycki-Webb's argument. We also show that arc graphs of non-orientable surfaces are $7$-hyperbolic, and arc-curve graphs of (non-)orientable surfaces are $9$-hyperbolic.
\end{abstract}

\tableofcontents

\section{Introduction}
Geometric group theory is a new field investigating structures of groups from a geometric viewpoint. In this field, it is one of the most important ideas to consider finitely generated groups themselves as geometric objects called Cayley graphs.
Geometric group theory is related to a lot of mathematic fields, for example, low-dimensional topology, hyperbolic geometry, algebraic topology. 
The quasi-isometry classification of finitely generated groups becomes one of the main research themes in geometric group theory since a suggestion by Gromov in the 1980s.
Hence, the study of quasi-isometry invariants, namely properties of spaces or groups that are invariant under quasi-isometries, is very important. 
In particular, the notion of {\it Gromov hyperbolicity} is one of the quasi-isometry invariants.
Furthermore, some quasi-isometry invariants arise from Gromov hyperbolicity.
Therefore, investigating whether geodesic spaces and finitely generated groups are Gromov hyperbolic or not is quite important for geometric group theory. 

For $g\geq 1$ and $n\geq 0$, let $N=N_{g,n}$ be a compact connected non-orientable surface of genus $g$ with $n$ boundary components.
The {\it curve graph} $\mathcal{C}(N)$ of $N$ is the graph whose vertex set is the set of homotopy classes of essential simple closed curves (or curves) and whose edges correspond to disjoint curves.
Curve graphs are often used to study mapping class groups of surfaces, geometric group theory, hyperbolic geometry, and so on.
In this paper, we consider a graph as a geodesic space as follows.
We set the length of each edge to be one, and the distance between two vertices is the length of the shortest edge-path connecting them.
A triangle formed by geodesic edge-paths in the graph (we call such a triangle a {\it geodesic triangle}) has a $k$-{\it center} ($k\geq 0$) if there exists a vertex such that the distance from it to each side of $T$ is not more than $k$.
A connected graph is {\it $k$-hyperbolic} if every geodesic triangle in the graph has a $k$-center.
We say that a graph is ({\it Gromov}) {\it hyperbolic} if it is $k$-hyperbolic for some $k\geq0$, and we refer to such a constant $k$ as a hyperbolicity constant for the graph.
Bestvina-Fujiwara\cite{BF} first proved that $\mathcal{C}(N)$ is Gromov hyperbolic, and Masur-Schleimer\cite{MS} gave another proof. However, the uniform hyperbolicity for curve graphs of non-orientable surfaces was not known. The main result of this paper is to prove the following:

\begin{thm}\label{mainthm}
If $\mathcal{C}(N)$ is connected, then it is {\rm 17}-hyperbolic.
\end{thm}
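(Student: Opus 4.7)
The plan is to follow the Hensel--Przytycki--Webb strategy of unicorn paths, adapting each step to the non-orientable setting. The central auxiliary object will be the arc graph $\mathcal{A}(N,p)$ (or an arc-curve graph) for $N$ equipped with either a marked point or a distinguished boundary component $p$; so the first step is to pass from curves in $\mathcal{C}(N)$ to arcs on a surface obtained by cutting $N$ along a suitable reference curve, replacing the removed curve by a boundary component on which we base arcs. Care is needed here because $N$ is non-orientable: cutting a one-sided curve produces one new boundary circle (changing the topological type differently from the orientable case), while cutting a two-sided curve produces two. I would handle these two cases separately, but in both produce a (possibly still non-orientable) surface $N'$ of strictly smaller complexity on which the induction hypothesis or the arc-graph hyperbolicity can be invoked.

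Next, given two arcs $a,b$ based at $p$ and in minimal position, I would define the \emph{unicorn arcs} between them exactly as in the orientable case: for each intersection point $x\in a\cap b$, take the initial subarc of $a$ up to $x$ concatenated with the terminal subarc of $b$ from $x$, and among these keep only those yielding embedded essential arcs. Ordering them by the position of $x$ along $a$ yields the unicorn path $P(a,b)$, consisting of arcs each disjoint from its successor. The two key technical lemmas to prove are then: \textbf{(1)} a slimness statement saying that for any third arc $c$, the unicorn path $P(a,b)$ lies in a uniform neighbourhood of $P(a,c)\cup P(c,b)$, and more precisely every vertex of $P(a,b)$ has a $1$-neighbour on one of the two other unicorn paths; and \textbf{(2)} the statement that unicorn paths are reparametrised quasi-geodesics in $\mathcal{A}(N,p)$ with uniformly bounded additive and multiplicative constants. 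These combine via the standard Hensel--Przytycki--Webb bookkeeping to show that $\mathcal{A}(N,p)$ is uniformly hyperbolic with a small explicit constant.

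The proofs of (1) and (2) are essentially combinatorial and orientation-blind: one performs surgeries at intersection points and checks that the resulting arcs remain embedded and essential. The main obstacle I anticipate is verifying that these surgery arguments go through unchanged when $N$ is non-orientable and when the arcs or curves involved may be one-sided. Specifically, I need to check that an arc obtained by concatenating initial and terminal subarcs of essential arcs is itself essential, and that disjointness relations in the unicorn path are preserved, without appealing to any orientation of a regular neighbourhood. I would verify this by working in the (orientable) regular neighbourhood of the union $a\cup b$ itself, where orientation-based arguments are still available locally, and by treating possibly one-sided curves via their two-sided boundary in the orientation double cover only when necessary.

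Finally, to recover the $17$-hyperbolicity of $\mathcal{C}(N)$ itself from the uniform hyperbolicity of these arc graphs, I would use the standard projection argument: given a geodesic triangle with curve vertices $\alpha,\beta,\gamma$ in $\mathcal{C}(N)$, cut along a well-chosen reference curve (for instance a vertex on one side of the triangle) to land in the arc graph of a simpler surface, apply the arc-graph hyperbolicity to obtain a center there, and transport this center back to a curve on $N$ with bounded distance loss. Tracking the constants carefully through the surgery, the slimness lemma, and the quasi-geodesic-to-geodesic passage should yield the same bound of $17$ that Hensel--Przytycki--Webb obtained in the orientable case, since the combinatorial core of their argument is preserved.
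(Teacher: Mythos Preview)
Your outline correctly identifies that the combinatorial core of unicorn arcs and unicorn paths is orientation-blind, and indeed Lemmas~\ref{key1}, \ref{key2}, \ref{key3} and Proposition~\ref{distances_between_unipaths_and_geodesic} carry over essentially verbatim. However, your mechanism for passing between curves and arcs is not the one used here (nor in \cite{HPW}), and as stated it has a real gap.

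You propose to \emph{cut $N$ along a reference curve} (e.g.\ a vertex of the triangle) and work in the arc graph of the resulting simpler surface $N'$. But cutting along a curve $\gamma$ does not send vertices of $\mathcal{C}(N)$ to vertices of $\mathcal{A}(N')$ in any controlled way: a curve disjoint from $\gamma$ stays a curve, while a curve meeting $\gamma$ many times becomes a collection of many arcs. There is no evident map $\mathcal{C}(N)\to\mathcal{A}(N')$ with bounded Lipschitz constant along which you could transport geodesic triangles, and no evident way to pull a center back. The ``induction on complexity'' you allude to is not how the constant $17$ is obtained.

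What the paper actually does (following \cite{HPW}) is to stay on the \emph{same} surface. When $\partial N\neq\emptyset$ one works in the arc-curve graph $\mathcal{AC}(N)$: each curve vertex $a$ is replaced by an \emph{adjacent} arc vertex $\bar a\in\mathcal{A}^{(0)}(N)$ (an arc disjoint from $a$, ending on $\partial N$), and one runs unicorn paths between $\bar a,\bar b,\bar d$. The passage back to $\mathcal{C}(N)$ is via an explicit retraction $r\colon\mathcal{AC}(N)\to\mathcal{C}(N)$ sending an arc to a boundary component of a regular neighbourhood of the arc together with $\partial N$; the key step---and the only place where non-orientability genuinely intervenes---is Lemma~\ref{r_2-Lipschitz}, showing $r$ is $2$-Lipschitz. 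Here one must handle the case that an arc traverses crosscaps an odd number of times, so that $r(a)$ is a one-sided (``twisted'') curve, and the case analysis of disjoint arc pairs is more elaborate than in the orientable setting. The constant $17$ then comes from Proposition~\ref{k8} ($k\le 8$) together with the $2$-Lipschitz bound. For closed $N$ one does \emph{not} cut along a curve but simply removes an open disk (choosing the puncture off all geodesic representatives) and uses a $1$-Lipschitz retraction $\mathcal{C}(\overline N)\to\mathcal{C}(N)$. Your proposal is missing both the retraction $r$ and its Lipschitz analysis, which is precisely where the work specific to non-orientable surfaces lies.
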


Let $S=S_{g,n}$ be an orientable surface of genus $g\geq 0$ with $n\geq 0$ boundary components. First, Masur-Minsky\cite{MM} proved that each curve graph $\mathcal{C}(S)$ of $S$ is hyperbolic in 1999.
After their original proof, various other proofs of hyperbolicity for curve graphs of orientable surfaces were given by several authors. 
Bowditch\cite{B1} gave an upper bound of the hyperbolicity constant which depends on the genus and the number of boundary components in 2006.
Another proof was given by Hamenst\"{a}dt\cite{H07}. 
Recently, Aougab\cite{A}, Bowditch\cite{B2}, Clay-Rafi-Schleimer\cite{CRS}, and Hensel-Przytycki-Webb\cite{HPW} independently proved that one can choose the hyperbolicity constants which do not depend on the topological types of orientable surfaces. 
In particular, Hensel-Przytycki-Webb\cite{HPW} showed that $\mathcal{C}(S)$ is $17$-hyperbolic by a combinatorial argument.
The argument by Hensel-Przytycki-Webb seems to give an optimum constant.

We prove Theorem~\ref{mainthm} by applying Hensel-Przytycki-Webb's argument in~\cite{HPW} to the case of non-orientable surfaces. 

In~\cite{HPW}, they also showed that arc graphs of orientable surfaces are $7$-hyperbolic. We prove a similar result for non-orientable surfaces:

\begin{thm}\label{second_thm}
An arc graph $\mathcal{A}(N)$ of $N$ is {\rm 7}-hyperbolic.
\end{thm}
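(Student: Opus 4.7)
The plan is to import the Hensel--Przytycki--Webb proof from~\cite{HPW} essentially verbatim, verifying at each step that orientability is never used. The central construction is that of \emph{unicorn arcs}: fix a point $p$ on a boundary component of $N$ and let $a,b$ be two essential arcs based at $p$ in minimal position. To each intersection point $\pi \in a\cap b$ associate the \emph{unicorn arc} $c_{\pi} := a[p,\pi]\cup b[\pi,p]$. Ordering the $\pi$'s by their position along $a$ starting at $p$ yields a canonical \emph{unicorn path} $P(a,b)$ in $\mathcal{A}(N)$ from $a$ to $b$.

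The core of the argument is to reestablish the three properties that HPW prove for orientable surfaces: (i) each $c_{\pi}$ is an essential arc (so defines a vertex of $\mathcal{A}(N)$); (ii) consecutive unicorn arcs along $P(a,b)$ are either equal or joined by an edge of $\mathcal{A}(N)$, obtained via an explicit surgery near the splicing point; and (iii) unicorn triangles are $1$-slim, meaning that for any triple $a,b,c$ of based arcs every vertex of $P(a,b)$ lies at distance at most $1$ in $\mathcal{A}(N)$ from $P(a,c)\cup P(c,b)$. Each of these is proved by a local, purely combinatorial surgery on pairs of arcs in minimal position, with the surgered pieces being disks or arcs in disks.

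Once these three properties are in hand, the $7$-hyperbolicity of $\mathcal{A}(N)$ follows from the general graph-theoretic criterion of~\cite{HPW}, which is a black box: any connected graph equipped with a symmetric family of paths between every pair of vertices that is both \emph{coarsely geodesic} (consecutive vertices are adjacent) and has \emph{$1$-slim triangles} is automatically $7$-hyperbolic. This step uses no geometry of the surface at all, so we inherit the same numerical constant that HPW obtain.

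The main obstacle, and essentially the only place where the non-orientable case demands attention, is step (i)--(ii) when one (or both) of the arcs being spliced is one-sided, i.e.\ has a M\"obius band regular neighborhood. Here one must check that $c_{\pi}$ does not accidentally become inessential or peripheral, and that the disjointness surgery between $c_{\pi}$ and $c_{\pi'}$ still produces legitimate vertices of $\mathcal{A}(N)$. This is handled by observing that the HPW surgeries take place inside regular neighborhoods of $a\cup b$ and amount to resolving intersections inside disks; the local model is insensitive to whether a global regular neighborhood of an arc is an annulus or a M\"obius band, so essentiality and non-peripherality are preserved. With these verifications in place, Theorem~\ref{second_thm} follows.
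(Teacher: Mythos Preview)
Your proposal has a genuine gap. The ``black box'' you invoke is not what \cite{HPW} actually prove or use: there is no lemma in \cite{HPW} asserting that a family of edge-paths with $1$-slim triangles yields $7$-hyperbolicity. General criteria of that flavor (Bowditch's guessing-geodesics lemma and its variants) do exist, but they produce hyperbolicity constants far worse than $7$. The HPW route to the specific constant $7$, reproduced in this paper, requires two ingredients you omit entirely: the \emph{subpath invariance} of unicorn paths (Lemma~\ref{key3}, the analogue of \cite[Lemma~3.5]{HPW}) and the resulting \emph{fellow-traveling with geodesics} (Proposition~\ref{distances_between_unipaths_and_geodesic}), which shows that every unicorn arc lies within distance $6$ of any geodesic with the same endpoints. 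The bound $6$ comes from solving $k \le \lceil \log_2 8k\rceil$ in a recursive estimate (Lemma~\ref{a}) that hinges on the subpath property. Combining this bound with the $1$-center triple produced by Lemma~\ref{key2} then gives a $7$-center for every geodesic triangle. Your properties (i)--(iii), even once verified, do not by themselves yield the constant $7$.

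Two smaller points. First, unicorn arcs are not defined relative to a single basepoint $p$: one fixes one endpoint $\alpha$ of $a$ and one endpoint $\beta$ of $b$ (possibly on different boundary components), and the unicorn arc runs from $\alpha$ to $\beta$; the four choices give the family $P(a,b)$. Second, your worry about ``one-sided arcs with M\"obius band neighborhoods'' is a confusion: properly embedded arcs always have disk neighborhoods, and essentiality of a unicorn arc follows immediately because an inessential one would force a half-bigon between $a$ and $b$, contradicting minimal position. The genuine non-orientable content of this paper lies in the bigon criterion (Proposition~\ref{minimal_position}) and, for the curve graph, in the retraction of Section~\ref{Curve graphs are hyperbolic}; the unicorn-path lemmas themselves go through verbatim.
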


We also consider arc-curve graphs.
The hyperbolicity for arc-curve graphs of orientable surfaces was proved by Korkmaz-Papadopoulos\cite[Corollary 1.4]{KP}. 
The uniform hyperbolicity, however, was not shown. 
We also prove:

\begin{thm}\label{third_thm}
If an arc-curve graph $\mathcal{AC}(N)$ of $N$ is connected, then it is {\rm 9}-hyperbolic.
\end{thm}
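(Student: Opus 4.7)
The plan is to deduce Theorem~\ref{third_thm} from Theorem~\ref{second_thm} via a reduction to the arc graph, paralleling the strategy of Hensel--Przytycki--Webb in the orientable setting. The arc graph is a natural subgraph of the arc-curve graph, and the task is to show that its hyperbolicity transfers with an additive loss of at most $2$, producing the constant $9=7+2$.

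The first step is a \emph{density lemma}: every curve vertex of $\mathcal{AC}(N)$ is adjacent to some arc vertex, i.e.\ every essential simple closed curve on $N$ is disjoint from an essential arc. Because $\mathcal{AC}(N)$ is assumed connected, $N$ must have $\partial N \neq \emptyset$ and sufficient topological complexity, and a direct surface-topology argument (cutting along the curve and picking a boundary-meeting arc in a well-chosen component of the complement) produces the desired arc. This shows $\mathcal{A}(N)$ is $1$-dense in $\mathcal{AC}(N)$.

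The second step is a metric comparison
\[
d_{\mathcal{AC}}(a,b)\;\le\;d_{\mathcal{A}}(a,b)\;\le\;d_{\mathcal{AC}}(a,b)+c
\]
for arcs $a,b$ and a small constant $c$. The left inequality is immediate from the subgraph inclusion. For the right inequality, given an $\mathcal{AC}$-geodesic between $a$ and $b$, each intermediate curve vertex is replaced by a disjoint arc supplied by the density lemma, and consecutive vertices in the resulting sequence are linked by short paths in $\mathcal{A}(N)$. Making this replacement uniform across all non-orientable surfaces under consideration, and in particular handling configurations involving one-sided curves whose complements behave differently from the orientable case, is the principal technical obstacle.

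The final step is the hyperbolicity transfer. Given a geodesic triangle $T$ in $\mathcal{AC}(N)$ with vertices $v_1,v_2,v_3$, replace any curve vertex $v_i$ by a disjoint arc $a_i$ to obtain three arcs in $\mathcal{A}(N)$, and apply Theorem~\ref{second_thm} to the corresponding arc-triangle to find a $7$-center $p$. Invoking the density lemma at the endpoints (contributing at most $1$) and the metric comparison along the sides (contributing at most another $1$), one argues by the triangle inequality that $p$ lies within $\mathcal{AC}$-distance $9$ of each side of $T$, yielding the required $9$-center. Reconciling the $\mathcal{A}$-geodesic sides with the $\mathcal{AC}$-geodesic sides of $T$ is the delicate bookkeeping point; the precise form of the metric comparison in the second step must be calibrated to absorb exactly this discrepancy, which is why controlling $c$ in the second step is where the real work lies.
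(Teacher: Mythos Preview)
Your approach has a genuine gap in Step~3, and the metric comparison in Step~2 cannot repair it. Even if you establish $d_{\mathcal{A}}(a,b)\le d_{\mathcal{AC}}(a,b)+c$ for a small $c$, this bounds only the \emph{lengths} of geodesics, not their Hausdorff distance. The $7$-center $p$ you extract from Theorem~\ref{second_thm} lies within $7$ of the $\mathcal{A}$-geodesic sides of the auxiliary arc triangle $a_1a_2a_3$; to conclude it lies within $9$ of the $\mathcal{AC}$-geodesic sides of $T$ you would need a uniform bound on how far an $\mathcal{A}$-geodesic between two arcs can stray from the $\mathcal{AC}$-geodesic between the same (or nearby) vertices. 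A length comparison does not give this: it only says the $\mathcal{A}$-geodesic is a $(1,c)$-quasi-geodesic in $\mathcal{AC}(N)$, and the Morse lemma then yields a Hausdorff bound depending on the hyperbolicity constant of $\mathcal{AC}(N)$---which is exactly what you are trying to determine. The ``delicate bookkeeping'' you flag is in fact a circularity, not an arithmetic detail.

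The paper avoids this entirely by never comparing geodesics in the two graphs. Its proof does share your first move (replacing curve vertices by adjacent arcs), but then, rather than invoking Theorem~\ref{second_thm} as a black box, it reruns the unicorn-path argument of Proposition~\ref{distances_between_unipaths_and_geodesic} directly against the original $\mathcal{AC}$-geodesic sides of $T$. This is Proposition~\ref{k8} (for curve--curve sides) and Proposition~\ref{prop_in_third_thm} (for curve--arc sides): one shows that any unicorn arc between the replacement arcs lies within distance $8$ (respectively $7$) of the given $\mathcal{AC}$-geodesic, by replacing each vertex $x_i$ of the relevant path by an adjacent arc $\overline{x_i}$ and applying Lemma~\ref{a} to the sequence $\{\overline{x_i}\}$. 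Combined with the $1$-slim triangle of unicorn paths (Lemma~\ref{key2}), this yields the $9$-center directly in $\mathcal{AC}(N)$, with a case split according to how many of the three vertices are curves. The constants $7$, $8$, $9$ arise from the growth of the log-term in the inequality $k\le\lceil\log_2(mk)\rceil+\epsilon$ as $m$ and $\epsilon$ increase with the number of curve vertices, not from adding a fixed $2$ to the arc-graph constant.
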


By the same argument as we give in the proof of Theorem~\ref{third_thm}, we prove the following:

\begin{thm}\label{fourth_thm}
If an arc-curve graph $\mathcal{AC}(S)$ of $S$ is connected, then it is {\rm 9}-hyperbolic.
\end{thm}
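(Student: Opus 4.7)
The plan is to follow verbatim the argument given for Theorem~\ref{third_thm}, specialised from the non-orientable surface $N$ to the orientable surface $S$. The non-orientable case is the genuinely new ingredient; the orientable case follows by an exact repetition of the combinatorial technology of Hensel--Przytycki--Webb \cite{HPW}, adapted so that the vertex set of the path-system contains both arcs and essential simple closed curves.

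First I would fix two vertices $\alpha,\beta\in\mathcal{AC}(S)$ (each one either an essential arc or a simple closed curve) realised in minimal position on $S$, choose orientations and basepoints, and construct the \emph{unicorn paths} $P(\alpha,\beta)$ in $\mathcal{AC}(S)$: each intermediate vertex is obtained by concatenating a sub-arc of $\alpha$ from its basepoint to some intersection point $p\in\alpha\cap\beta$ with the sub-arc of $\beta$ from $p$ to its basepoint, and then resolving the result (possibly by surgery) to an essential arc or essential simple closed curve on $S$. Ordering the intersection points along $\alpha$ gives a finite sequence whose consecutive vertices are disjoint (or equal), hence adjacent in $\mathcal{AC}(S)$; therefore $P(\alpha,\beta)$ is a $1$-Lipschitz path and its endpoints are $\alpha$ and $\beta$.

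The core step is the slim-triangle estimate for unicorn triangles: for any three vertices $\alpha,\beta,\gamma\in\mathcal{AC}(S)$, every vertex of $P(\alpha,\beta)$ lies within a uniformly bounded distance (the small constant appearing in the proof of Theorem~\ref{third_thm}) of $P(\alpha,\gamma)\cup P(\beta,\gamma)$. This is proved exactly as in \cite{HPW}: given a unicorn vertex on the $\alpha\beta$-side, one finds a compatible unicorn vertex on one of the other two sides by cutting along a shared sub-arc, and then checks disjointness by elementary surface-topology arguments. The only adjustment compared with the pure arc graph is that at each stage one must keep track of whether the output is an arc or a curve, which is precisely the point where the constant rises from $7$ to $9$.

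With the slim-triangle property for unicorn paths in hand, the standard guessing-geodesics criterion used in \cite{HPW} promotes the uniform slimness of unicorn triangles to the required $k$-centroid property for genuine geodesic triangles in $\mathcal{AC}(S)$, yielding the constant $9$. The principal obstacle, as in Theorem~\ref{third_thm}, is precisely the slim-triangle estimate, because the case analysis must simultaneously handle arc/arc, arc/curve, and curve/curve unicorn vertices; once that estimate is established in the non-orientable case, however, the orientable case is a direct specialisation (in fact strictly easier, since there are no one-sided curves to consider), and no new ideas are required.
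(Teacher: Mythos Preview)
Your proposal takes a route that diverges substantially from the paper, and the divergence is not merely cosmetic: the central construction you describe does not exist in \cite{HPW} or in this paper, and as written it is not well-defined.

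You propose to build unicorn paths $P(\alpha,\beta)$ directly between arbitrary vertices $\alpha,\beta\in\mathcal{AC}(S)$, allowing either endpoint to be a curve, by ``concatenating a sub-arc of $\alpha$ from its basepoint to some intersection point $p$ with the sub-arc of $\beta$\ldots and then resolving the result (possibly by surgery)''. But the unicorn construction in \cite{HPW} and in Section~\ref{Unicorn paths} is defined \emph{only} between two arcs: it depends essentially on each object having a distinguished endpoint on $\partial S$, so that the concatenation $a'\cup b'$ is an embedded properly embedded arc. A closed curve has no such endpoint, and the phrase ``possibly by surgery'' hides exactly the problem. There is no lemma in \cite{HPW} (and none here) establishing that such a mixed arc/curve unicorn path consists of adjacent vertices, or that the $1$-slim triangle lemma (Lemma~\ref{key2}) holds for it. Consequently the guessing-geodesics criterion you invoke has no family of paths to feed on.

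The paper's actual argument never builds unicorn paths with a curve as an endpoint. Instead it performs a case analysis on how many of the three vertices $a,b,d$ of a geodesic triangle in $\mathcal{AC}(S)$ are curves. Each curve vertex $a$ is replaced by an \emph{adjacent arc} $\bar a\in\mathcal{A}^{(0)}(S)$, and unicorn paths are then taken between the resulting three arcs. One then proves, case by case, that each such unicorn path stays close to the corresponding geodesic side of the triangle in $\mathcal{AC}(S)$: distance $\le 6$ in the arc/arc case (Proposition~\ref{distances_between_unipaths_and_geodesic}), $\le 7$ in the arc/curve case (Proposition~\ref{prop_in_third_thm}), and $\le 8$ in the curve/curve case (Proposition~\ref{k8}). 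Combining this with the $1$-slim triangle Lemma~\ref{key2} for the arc triangle $\bar a\bar b\bar d$ yields a $7$-, $8$-, or $9$-center directly, without invoking a guessing-geodesics criterion. The constant rises to $9$ precisely because replacing a curve by an adjacent arc costs one extra step on each affected side, not because of any arc-versus-curve bookkeeping inside the unicorn path itself.
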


In~\cite{HPW}, for the cases where $a$, $b$, and $d$ are vertices of $\mathcal{A}(S)$ and where $a$, $b$, and $d$ are vertices of $\mathcal{C}(S)$, Hensel-Przytycki-Webb proved a geodesic triangle $T=abd$ has a $7$-center and a $9$-center in $\mathcal{AC}(S)$ respectively. We show that a geodesic triangle $T=abd$ has an $8$-center for the cases where $a$ is a vertex of $\mathcal{C}(S)$ and $b$ and $d$ are vertices of $\mathcal{A}(S)$, and where $a$ and $b$ are vertices of $\mathcal{C}(S)$ and $d$ is a vertex of $\mathcal{A}(S)$ to prove Theorem~\ref{fourth_thm}.
 
Here, we describe our idea of the proof of Theorem~\ref{mainthm}.
First, in Section~\ref{Unicorn paths} we define {\it unicorn arcs} and {\it unicorn paths} between two arcs on $N$, which are defined in~\cite{HPW} for the case of orientable surfaces. 
One of the important properties of unicorn paths is that they are paths in each arc graph $\mathcal{A}(N)$ of $N$ (Proposition~\ref{rem1}).
 
Second, we show key lemmas related to unicorn paths to prove Theorem~\ref{mainthm}.
The particularly important lemma states that these paths form 1-slim triangles in $\mathcal{A}(N)$ (Lemma~\ref{key2}). 

Finally, in Section~\ref{Curve graphs are hyperbolic}, we show the following. For any geodesic triangle $T=abd$ in $\mathcal{C}(N)$ ($a$, $b$, and $d$ are three vertices of $\mathcal{C}(N)$), let $\bar{a}$, $\bar{b}$, and $\bar{d}$ be three vertices of $\mathcal{A}(N)$ which are adjacent to $a$, $b$, and $d$ in the arc-curve graph $\mathcal{AC}(N)$ of $N$ respectively.
Then, the distance between the side of $T$ connecting $a$ and $b$ (resp. $b$ and $d$, $d$ and $a$) and any unicorn arc obtained from $\bar{a}$ and $\bar{b}$ (resp. $\bar{b}$ and $\bar{d}$, $\bar{d}$ and $\bar{a}$) is at most $8$.
Therefore, we can prove that $T$ has a $9$-center in $\mathcal{AC}(N)$.
Furthermore, we construct a retraction $r\colon\mathcal{AC}(N)\rightarrow \mathcal{C}(N)$, and show that $r$ is $2$-Lipschitz (Lemma~\ref{r_2-Lipschitz}).
When we prove this, there is a greatly different point from the case of orientable surfaces: if an arc $a$ goes through ``crosscaps" odd number of times, then $r(a)$ is ``twised."
After having proved this, we see that a $9$-center in $\mathcal{AC}(N)$ of $T$ is mapped to a $17$-center of $T$ in $\mathcal{C}(N)$.
This gives a proof of Theorem~\ref{mainthm}.

\section{Preliminaries}

A compact connected {\it non-orientable surface} of genus $g\geq 1$ with $n\geq 0$ boundary components is the connected sum of $g$ projective planes which is removed $n$ open disks.
We denote it by $N=N_{g,n}$. Note that $N_{g,n}$ is homeomorphic to the surface obtained from a sphere by removing $g+n$ open disks and attaching $g$ M\"{o}bius bands along their boundaries (see the left-hand side of Figure~\ref{fig_two_pattern_nonori_surface}).
We represent $N_{g,n}$ as a sphere with $g$ {\it crosscaps} and $n$ boundary components (see the right-hand side of Figure~\ref{fig_two_pattern_nonori_surface}).
We identify antipodal points of each periphery of a crosscap.

\begin{figure}[h]
\includegraphics[scale=0.63]{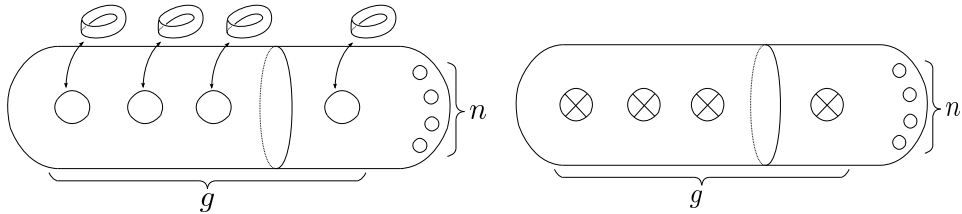}
\caption{A non-orientable surface $N_{g,n}$.}\label{fig_two_pattern_nonori_surface}
\end{figure}

An arc $a$ on $N$ is {\it properly embedded} if $\partial a \subseteq \partial N$ and $a$ is transversal to $\partial N$.
An arc $a$ on $N$ is called {\it essential} if it is not homotopic into $\partial N$.
A curve on $N$ is called {\it essential} if it does not bound a disk or a M\"{o}bius band, and it is not homotopic to a boundary component of $N$. 
We remark that a homotopy fixes each boundary component of $N$ setwise.
From now on, we consider arcs and curves which are properly embedded and essential.
The {\it arc-curve graph} $\mathcal{AC}(N)$ of $N$ is the graph whose vertex set $\mathcal{AC}^{(0)}(N)$ is the set of homotopy classes of arcs and curves on $N$.
Two vertices form an edge if they can be represented by disjoint arcs or curves.
The {\it arc graph} $\mathcal{A}(N)$ of $N$ is the subgraph induced on the vertices that are homotopy classes of arcs on $N$.
The {\it curve graph} $\mathcal{C}(N)$ of $N$ is the subgraph induced on the vertices that are homotopy classes of curves on $N$.

We set the length of each edge in $\mathcal{AC}(N)$, $\mathcal{A}(N)$, and $\mathcal{C}(N)$ to be $1$.
We define the distances $d_{\mathcal{AC}}(\cdot, \cdot)$, $d_{\mathcal{A}}(\cdot, \cdot)$, and $d_{\mathcal{C}}(\cdot, \cdot)$ in $\mathcal{AC}(N)$, $\mathcal{A}(N)$, and $\mathcal{C}(N)$ respectively by the minimal length of sequences of edges connecting the two vertices.
Now, we consider $\mathcal{AC}(N)$, $\mathcal{A}(N)$, and $\mathcal{C}(N)$, as geodesic spaces.

Two arcs $a$, $b$ (or two curves $a$, $b$) on $N$ are in {\it minimal position} if the number of intersections between $a$ and $b$ is minimal in the homotopy classes of $a$ and $b$. 

\begin{prop}\label{minimal_position}
Two arcs $a$, $b$ on $N$ are in minimal position if and only if $a$ and $b$ intersect transversely and they do not form any {\it bigons} (i.e. an embedded disk on $N$ bounded by a subarc of $a$ and a subarc of $b$) or any {\it half-bigons} (i.e. an embedded disk on $N$ bounded by a subarc of $a$, a subarc of $b$, and a part of a boundary component of $N$).
\end{prop}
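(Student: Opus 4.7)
The plan is to prove the two directions separately: the forward direction ($\Rightarrow$) by a direct isotopy-reduction argument, and the backward direction ($\Leftarrow$) by passing to the orientation double cover and invoking the corresponding bigon criterion for orientable surfaces, which is standard.

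For ($\Rightarrow$), I argue by contrapositive: suppose $a$ and $b$ cobound a bigon or half-bigon $D$, and choose $D$ innermost among all such disks. A small ambient isotopy of $a$ supported in a neighborhood of $D$ pushes the $a$-subarc of $\partial D$ across $D$ past the $b$-subarc, yielding an arc $a'$ homotopic to $a$ with $|a'\cap b|<|a\cap b|$ (the count drops by $2$ for a bigon and by $1$ for a half-bigon). Hence $a$ and $b$ are not in minimal position. The transversality assumption is needed so that ``innermost'' makes sense and the isotopy is well-defined.

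For ($\Leftarrow$), let $\pi\colon\tilde N\to N$ denote the orientation double cover, with free deck involution $\tau$; then $\tilde N$ is an orientable surface with (possibly disconnected) boundary. Set $\tilde a:=\pi^{-1}(a)$ and $\tilde b:=\pi^{-1}(b)$, each a disjoint union of one or two properly embedded essential arcs, and observe $|\tilde a\cap \tilde b|=2|a\cap b|$ since $\pi$ is $2$--to--$1$ and $a,b$ are transverse. The key claim is that $\tilde a$ and $\tilde b$ form no bigons or half-bigons in $\tilde N$: if $\tilde D$ were such a disk, chosen innermost, then $\tilde D$ and $\tau(\tilde D)$ must be disjoint (otherwise one extracts a strictly smaller such disk, using that $\tau$ acts freely), so $\pi|_{\tilde D}$ embeds $\tilde D$ as a bigon or half-bigon in $N$, contradicting the hypothesis. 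Applying the already-established orientable bigon criterion to $\tilde N$, we conclude $\tilde a,\tilde b$ are in minimal position in $\tilde N$. Finally, if some $a''$ homotopic to $a$ had $|a''\cap b|<|a\cap b|$, the homotopy lifts and yields $|\tilde{a''}\cap\tilde b|<|\tilde a\cap\tilde b|$, contradicting minimality upstairs; so $a$ and $b$ are in minimal position.

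The main obstacle will be the claim that bigons and half-bigons downstairs obstruct those upstairs. The closed-bigon case is handled cleanly by freeness of $\tau$ on the interior of $\tilde N$, but the half-bigon case requires care since $\pi|_{\partial\tilde N}\colon\partial\tilde N\to\partial N$ is itself a (possibly nontrivial) cover whose components $\tau$ may either swap or preserve; one must verify that an innermost half-bigon $\tilde D$ really is disjoint from $\tau(\tilde D)$ along $\partial\tilde N$ as well, which is where the freeness of $\tau$ is used again.
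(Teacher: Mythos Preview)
Your proof is correct and takes a genuinely different route from the paper for the harder direction $(\Leftarrow)$. The paper argues by contrapositive: it doubles $N$ along the boundary components meeting $a$ and $b$, converting the arcs into closed curves $a\cup a'$ and $b\cup b'$ on a (still possibly non-orientable) doubled surface $M$, and then invokes Stukow's bigon criterion for closed curves on non-orientable surfaces (Proposition~\ref{Stukow}); the mirror symmetry of the double then translates a bigon between $a\cup a'$ and $b\cup b'$ into either a bigon or a half-bigon between $a$ and $b$. You instead pass to the orientation double cover $\tilde N$ and invoke the standard bigon criterion for arcs on \emph{orientable} surfaces. Both reductions are natural---the paper trades arcs for closed curves, you trade non-orientable for orientable. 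Your route has the advantage of appealing only to the textbook orientable criterion, whereas the paper relies on Stukow's non-orientable curve result, which itself needs proof; on the other hand, once Stukow is granted the paper's argument is shorter, and the reflection symmetry makes the bigon/half-bigon dichotomy transparent.

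One simplification for your half-bigon concern: every boundary circle of a surface has an annular collar and is therefore an orientation-preserving loop, so its preimage in the orientation double cover always consists of two circles exchanged by $\tau$. Thus the case you flag---$\tau$ preserving a component of $\partial\tilde N$---never arises, and the boundary arc $\gamma$ of an innermost half-bigon $\tilde D$ is automatically disjoint from $\tau(\gamma)$. Combined with the fact that $\tau$ also swaps the two lifts of $a$ and the two lifts of $b$, this gives $\tilde D\cap\tau(\tilde D)=\emptyset$ directly, without needing to ``extract a strictly smaller disk.''
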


We use the following proposition to prove Proposition~\ref{minimal_position}.

\begin{prop}{\rm(}\cite[Proposition 2.1]{Stukow}{\rm)}\label{Stukow}
Let $N$ be a smooth, non-orientable, compact surface, and $a$ and $b$ essential curves on $N$. Then $a$ and $b$ are in minimal position if and only if $a$ and $b$ do not form a bigon.
\end{prop}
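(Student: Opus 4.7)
The plan is to prove the two implications separately, handling the forward direction by a standard bigon-removal argument and reducing the converse to the analogous bigon criterion for orientable surfaces via the orientation double cover.

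For the forward direction, assume $a$ and $b$ are in minimal position; after a small perturbation we may assume transversality. If a bigon $B$ bounded by a subarc $\alpha\subset a$ and a subarc $\beta\subset b$ existed, choose one that is innermost (so its interior meets $a\cup b$ only on $\partial B$). The isotopy that slides $\alpha$ across $B$ just past $\beta$ produces a simple closed curve $a'$ isotopic to $a$ with $|a'\cap b|=|a\cap b|-2$, contradicting minimality.

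For the converse, let $\pi\colon\widetilde N\to N$ denote the orientation double cover with deck involution $\tau$, and write $\widetilde a:=\pi^{-1}(a)$ and $\widetilde b:=\pi^{-1}(b)$. Each of these is a disjoint union of essential simple closed curves on the orientable surface $\widetilde N$: two parallel copies when the downstairs curve is two-sided, and one connected double cover when it is one-sided. The key observation is that $\widetilde a\cup\widetilde b$ bounds no bigon in $\widetilde N$, because any such bigon $\widetilde B$ is a disk and hence simply connected, so $\pi|_{\widetilde B}$ embeds $\widetilde B$ onto a bigon of $a\cup b$ in $N$, contrary to hypothesis. The classical bigon criterion for essential simple closed curves on orientable surfaces therefore implies that $\widetilde a$ and $\widetilde b$ are in minimal position in $\widetilde N$. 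If $a,b$ were not in minimal position in $N$, an isotopy $a\rightsquigarrow a'$ strictly reducing $|a\cap b|$ would lift to a $\tau$-equivariant isotopy $\widetilde a\rightsquigarrow\widetilde a'$ strictly reducing $|\widetilde a\cap\widetilde b|$, contradicting minimal position upstairs.

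The main subtlety is verifying that every essential simple closed curve on $N$ actually lifts to a disjoint union of \emph{essential} simple closed curves on $\widetilde N$, so that the invoked orientable criterion applies without degenerate (null-homotopic or peripheral) components. This follows from the fact that $\pi_{*}\colon\pi_{1}(\widetilde N)\to\pi_{1}(N)$ is injective, so a lift cannot be null-homotopic unless the original is, and peripherality is similarly ruled out by tracking the covering over boundary components. Low-complexity cases such as $N_{1,n}$ or $N_{2,n}$ for small $n$ (where $\widetilde N$ has small genus) can be handled either by direct inspection of the finitely many homotopy classes of essential curves, or by equipping $N$ with a flat or hyperbolic metric and arguing with geodesic representatives, for which minimal position is automatic and any excess intersection pattern is forced to create a bigon.
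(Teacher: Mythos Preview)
The paper does not supply its own proof of this proposition: it is quoted from \cite{Stukow} and used as a black box in the proof of Proposition~\ref{minimal_position}. So there is no in-paper argument to compare your attempt against, and your proposal should be read as an attempt to fill in an external reference.

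On its own merits, your forward direction is fine, and the overall strategy for the converse---lift to the orientation double cover $\pi\colon\widetilde N\to N$ and invoke the orientable bigon criterion---is a reasonable one. The step that fails is ``$\widetilde B$ is a disk and hence simply connected, so $\pi|_{\widetilde B}$ embeds $\widetilde B$ onto a bigon of $a\cup b$ in $N$.'' Simple connectivity of a subset of the total space of a covering does \emph{not} force the covering map to be injective on it (a long interval in $\mathbb{R}\to S^{1}$ already shows this). What one can salvage is the following: after passing to an \emph{innermost} bigon $\widetilde B$, its interior is a single complementary region of $\widetilde a\cup\widetilde b$; since the deck involution $\tau$ is fixed-point-free while every involution of a disk has a fixed point, $\tau$ cannot carry this region to itself, so $\pi$ is injective on $\operatorname{int}(\widetilde B)$. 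But the boundary still needs a separate argument. When $a$ (or $b$) is one-sided, its preimage $\widetilde a$ is a single circle on which $\tau$ acts by the half-turn, and it can happen that $\tau$ exchanges the two corners of $\widetilde B$; then $\pi$ identifies those corners and $\pi(\widetilde B)$ is not an embedded bigon in $N$. You either have to analyse this residual case directly, or---cleaner---replace the double-cover reduction by the hyperbolic-geodesic argument you mention at the end, which works on $N$ itself, handles one-sided curves uniformly, and makes the separate ``low-complexity'' discussion unnecessary.
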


\begin{proof}[Proof of Proposition~\ref{minimal_position}]
If $a$ and $b$ bound bigons or half-bigons, then we can reduce intersection points by a homotopy through bigons or half-bigons. 

Conversely, suppose that two arcs $a$ and $b$ on $N$ are not in minimal position.
We collect the boundary components which have endpoints of $a$ and $b$ in one side by a homeomorphism preserving intersections between $a$ and $b$.
We make a mirror reflective surface $N'$ of $N$ for the side which have endpoints of $a$ and $b$, and assume that $a'$ and $b'$ are arcs on $N'$ corresponding to $a$ and $b$ on $N$ respectively.
Note that $a'$ and $b'$ are not in minimal position since $a$ and $b$ are not in minimal position.
We attach each boundary component of $N'$ which has the endpoints of $a'$ and $b'$ to the reflective part of $N$, and let $M$ be the resulting surface.
Then $a\cup a'$ and $b\cup b'$ are essential curves and not in minimal position on $M$. By Proposition~\ref{Stukow}, $a\cup a'$ and $b\cup b'$ form bigons.
From the assumption that $N$ and $N'$ are mirror reflective surfaces each other, we have the following two cases.
One is that $a$ and $b$ form bigons on $N$ and $a'$ and $b'$ also form bigons on $N'$ at the reflective parts. 
The other is that $a\cup a'$ and $b\cup b'$ form bigons on $M$ which are mirror reflective for attached parts.
The former implies that $a$ and $b$ form bigons on $N$, and the latter implies that $a$ and $b$ form half-bigons on $N$, as desired. 
\end{proof}

\section{Unicorn paths}\label{Unicorn paths}

In this section, all lemmas come from Section 3 in~\cite{HPW} by changing the assumption of orientable surfaces to non-orientable surfaces. 

In this paper, we denote by $\overline{\alpha \alpha '}_{a}$ the subarc of $a$ whose endpoints are $\alpha $ and $\alpha '$.

\begin{defn}
Let $a$ and $b$ be two arcs on $N$ which are in minimal position, and let $\alpha$ and $\beta$ be one of the endpoints of $a$ and $b$ respectively. Choose $\pi \in a\cap b$. Let $a^{\prime}$ be a subarc of $a$ whose endpoints are $\alpha$ and $\pi $, and $b^{\prime}$ a subarc of $b$ whose endpoints are $\beta$ and $\pi $. If $a^{\prime}\cup b^{\prime}$ is an embedded arc on $N$, we say that $a^{\prime}\cup b^{\prime}$ is a \it{unicorn arc} obtained from $a^{\alpha}$, $b^{\beta}$ and $\pi $.
\end{defn}

A unicorn arc is uniquely determined by $\pi$, although not all intersection points between $a$ and $b$ determine unicorn arcs since the resulting arcs may not be embedded on $N$.

Note that $a^{\prime}\cup b^{\prime}$ is an essential arc.
Indeed, if $a^{\prime}\cup b^{\prime}$ is not essential, that is, if $a^{\prime}\cup b^{\prime}$ is homotopic into a boundary component of $N$, then $a$ and $b$ form a half-bigon.
This contradicts the assumption that $a$ and $b$ are in minimal position. 

\begin{defn}
Let $a'\cup b'$, $a''\cup b''$ be two unicorn arcs obtained from $a^{\alpha}$ and $b^{\beta}$, where $a', a''\subset a$ and $b', b''\subset b$.
We define $a'\cup b'\leq a''\cup b''$ by $a''\subset a'$ and $b'\subset b''$.
\end{defn}

\begin{lem}
The relation $\leq$ is a total order.
\end{lem}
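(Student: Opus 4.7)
The plan is to reduce the order relation to a pair of one-dimensional inequalities coming from parametrizations of $a$ and $b$, and then use the embeddedness condition in the definition of unicorn arc to rule out the only configuration that would prevent totality.

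First I would fix orientations of $a$ and $b$ starting at $\alpha$ and $\beta$ respectively, so that any point $\pi\in a\cap b$ acquires a parameter $s(\pi)\in[0,1]$ along $a$ and a parameter $t(\pi)\in[0,1]$ along $b$. A unicorn arc $a'\cup b'$ obtained from $a^{\alpha}$, $b^{\beta}$, $\pi$ is then determined entirely by $\pi$, with $a'=\overline{\alpha\pi}_{a}$ and $b'=\overline{\beta\pi}_{b}$. Under this reformulation, the condition $a''\subset a'$ means $s(\pi'')\le s(\pi')$, and $b'\subset b''$ means $t(\pi')\le t(\pi'')$. So $a'\cup b'\le a''\cup b''$ is exactly the statement that $s(\pi'')\le s(\pi')$ and $t(\pi')\le t(\pi'')$.

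From this reformulation reflexivity is immediate, antisymmetry follows because the two inequalities force $s(\pi')=s(\pi'')$ and $t(\pi')=t(\pi'')$, hence $\pi'=\pi''$, and transitivity is just transitivity of $\le$ applied to each coordinate separately. So the only non-trivial point is \emph{totality}: given two unicorn arcs with distinct determining intersection points $\pi'\ne\pi''$, I need one of the two pairs of inequalities to hold.

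The main obstacle, and the only step that uses anything beyond formal manipulation, is ruling out the bad case in which $s(\pi')<s(\pi'')$ and $t(\pi')<t(\pi'')$ (the opposite strict inequalities being excluded by the same argument with primes swapped). In this bad case, $\pi'$ lies on the subarc $a''=\overline{\alpha\pi''}_{a}$ since $s(\pi')<s(\pi'')$, and $\pi'$ also lies on the subarc $b''=\overline{\beta\pi''}_{b}$ since $t(\pi')<t(\pi'')$. Thus $\pi'\in a''\cap b''$, but by the definition of a unicorn arc $a''\cup b''$ is embedded, which forces $a''\cap b''=\{\pi''\}$. Hence $\pi'=\pi''$, contradicting $\pi'\ne\pi''$. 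This rules out the bad configuration and establishes totality, completing the proof.
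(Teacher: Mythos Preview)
Your proof is correct and follows essentially the same approach as the paper. Both arguments check the easy axioms (reflexivity, transitivity, antisymmetry) formally and then establish totality by using the embeddedness condition on the second unicorn arc to exclude the incomparable configuration; you phrase this via parameters $s(\pi),t(\pi)$ along $a$ and $b$, while the paper works directly with inclusions of subarcs, but the key step---observing that the ``bad'' intersection point would land on both $a''$ and $b''$ and hence violate $a''\cap b''=\{\pi''\}$---is identical.
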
 

\begin{proof}
We have $a'\cup b'\leq a'\cup b'$ since $a'\subset a' $ and $b'\subset b'$.
Suppose that $a_{1}\cup b_{1}\leq a_{2}\cup b_{2}$ and $a_{2}\cup b_{2}\leq a_{3}\cup b_{3}$.
Then $a_{3}\subset a_{2}\subset a_{1}$ and $b_{1}\subset b_{2}\subset b_{3}$, and so it follows that $a_{1}\cup b_{1}\leq a_{3}\cup b_{3}$.
Suppose that $a_{1}\cup b_{1}\leq a_{2}\cup b_{2}$ and $a_{2}\cup b_{2}\leq a_{1}\cup b_{1}$.
Then we have $a_{2}\subset a_{1}$ and $b_{1}\subset b_{2}$, and $a_{1}\subset a_{2}$ and $b_{2}\subset b_{1}$.
Therefore $a_{1}\cup b_{1}= a_{2}\cup b_{2}$ .
For unicorn arcs $c_{1}$ and $c_{2}$ obtained from $a^{\alpha}$ and $b^{\beta}$, set $c_{1}=a_{1}\cup b_{1}$ and $c_{2}=a_{2}\cup b_{2}$.
Since both $a_{1}$ and $a_{2}$ contain $\alpha$, we have either $a_{1}\subset a_{2}$ or $a_{2}\subset a_{1}$.
We assume that $a_{1}\subset a_{2}$, and take $\pi_{1}\in a\cap b$ such that $a_{1}=\overline{\alpha \pi_{1}}_{a}$.
Then $\pi_{1}\not\in b_{2}$, since $c_{2}$ is an embedded arc.
Hence $b_{2}$ is contained in one of the components of $b-\{\pi_{1}\}$ for the connectedness of $b_{2}$.
Since $b_{1}$ is one of the components of $b-\{\pi_{1}\}$ which has $\beta$ and $b_{2}$ has $\beta$, we get $b_{1}\subset b_{2}$.
Hence $c_{2}\leq c_{1}$, and so the relation $\leq$ is a total order.
\end{proof}

Let $(c_{1}, c_{2},\ldots, c_{n-1})$ be the ordered set of all unicorn arcs obtained from $a^{\alpha}$ and $b^{\beta}$.

\begin{defn}
We call the sequence ${\mathcal P}(a^{\alpha },b^{\beta })=(a=c_{0}, c_{1},\ldots, c_{n-1}, c_{n}=b)$ the {\it unicorn path} between $a^{\alpha}$ and $b^{\beta}$.
\end{defn}

Then, we have a natural question similar to that of the case of orientable surfaces whether a unicorn path ${\mathcal P}(a^{\alpha}, b^{\beta})$ becomes a path in $\mathcal{A}(N)$ or not. We can show the following:

\begin{prop}\label{rem1}
Consecutive arcs in a unicorn path represent adjacent vertices of $\mathcal{A}(N)$.
\end{prop}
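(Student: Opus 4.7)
The plan is to show that consecutive unicorn arcs $c_i$ and $c_{i+1}$ admit disjoint representatives on $N$. Write
\[
c_i = \overline{\alpha\pi}_a \cup \overline{\pi\beta}_b, \qquad c_{i+1} = \overline{\alpha\pi'}_a \cup \overline{\pi'\beta}_b,
\]
with tips $\pi$ and $\pi'$. The relation $c_i \leq c_{i+1}$ in the total order places $\pi'$ strictly between $\alpha$ and $\pi$ along $a$, and $\pi$ strictly between $\pi'$ and $\beta$ along $b$.

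The crux is the following claim: the interiors of the subarcs $\overline{\pi'\pi}_a \subset a$ and $\overline{\pi'\pi}_b \subset b$ are disjoint. I would argue by contradiction. Suppose some $\pi'' \in a \cap b$ lies in both interiors, and choose $\pi''$ closest to $\pi'$ along $a$. Then I would check that $\gamma = \overline{\alpha\pi''}_a \cup \overline{\pi''\beta}_b$ is an embedded arc---hence a unicorn arc obtained from $a^\alpha$ and $b^\beta$---by splitting $\overline{\alpha\pi''}_a = \overline{\alpha\pi'}_a \cup \overline{\pi'\pi''}_a$ and $\overline{\pi''\beta}_b = \overline{\pi''\pi}_b \cup \overline{\pi\beta}_b$ and inspecting the four pairwise intersections. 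Three vanish: $\overline{\alpha\pi'}_a \cap \overline{\pi''\pi}_b$ and $\overline{\alpha\pi'}_a \cap \overline{\pi\beta}_b$ by embeddedness of $c_{i+1}$ combined with $\pi' \notin \overline{\pi''\pi}_b \cup \overline{\pi\beta}_b$, and $\overline{\pi'\pi''}_a \cap \overline{\pi\beta}_b$ by embeddedness of $c_i$ combined with $\pi \notin \overline{\pi'\pi''}_a$; the remaining pair $\overline{\pi'\pi''}_a \cap \overline{\pi''\pi}_b$ reduces to $\{\pi''\}$ by the minimal choice of $\pi''$. The tip of $\gamma$ then lies strictly between $\pi'$ and $\pi$ on both $a$ and $b$, so the total order gives $c_i < \gamma < c_{i+1}$, contradicting consecutiveness.

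With the claim established, a routine accounting of intersections among the four constituent subarcs of $c_i$ and $c_{i+1}$ yields $c_i \cap c_{i+1} = \overline{\alpha\pi'}_a \cup \overline{\pi\beta}_b$: the two arcs share only their common initial portion along $a$ and common terminal portion along $b$, with no further transverse meeting. A small isotopy supported in tubular neighborhoods of these two shared subarcs then separates $c_i$ from $c_{i+1}$: push $\overline{\alpha\pi'}_a \subset c_i$ to the side of $a$ opposite to the direction in which $\overline{\pi'\pi}_b$ leaves $\pi'$, and push $\overline{\pi\beta}_b \subset c_i$ analogously at $\pi$. The resulting arc is homotopic to $c_i$ and disjoint from $c_{i+1}$, so the two vertices are adjacent in $\mathcal{A}(N)$.

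The main obstacle is the embeddedness check inside the claim, where one must track which combinations of subarcs can meet and invoke the correct embeddedness of $c_i$ or $c_{i+1}$ at each step. Non-orientability of $N$ enters nowhere, since the whole argument takes place inside tubular neighborhoods of embedded arcs, which are orientable regardless.
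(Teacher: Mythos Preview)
Your proof is correct and follows essentially the same approach as the paper's own argument: both isolate the key claim that the segment of $a$ between the two consecutive tips meets the corresponding segment of $b$ only at the tips, prove it by producing a contradictory intermediate unicorn arc from a hypothetical interior intersection point, and then push one arc off the other along the shared initial and terminal portions. Your embeddedness check for the intermediate arc $\gamma$ is spelled out more carefully than the paper's (which simply asserts that such a point ``becomes the next point determining the unicorn arc''), but the underlying idea is identical.
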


\begin{proof}
Let $c_{i}= a'\cup b'$ ($2\leq i\leq n-1$) and $\pi \in a'\cap b'$.
We assume that $\pi'$ is the point in $(a-a')\cap b$ which is nearest to $\alpha$ along $a$ of the points determining a unicorn arc.
Therefore, the intersection point $\pi'$ determines the unicorn arc $c_{i-1}$.
The unicorn arc $c_{i}$ does not pass any points between $\pi$ and $\pi'$ of $a\cap b$, otherwise the point becomes the next point determining the unicorn arc next to $c_{i}$ and this contradicts the assumption of $\pi'$.
Thus, $c_{i}$ and $c_{i-1}$ do not intersect between $\pi$ and $\pi'$.
Furthermore, there exists an arc homotopic to $c_{i}$ which is disjoint from $c_{i-1}$.
Indeed, it is sufficient to choose the neighborhood of $a'$ not intersecting $c_{i-1}$ when $c_{i}$ turns at $\pi$, and the neighborhood of $b'$ not intersecting $c_{i-1}$ at $\pi'$.
For $i=1, n$, the fact that $c_{i-1}$ and $c_{i}$ form an edge follows similarly.
\end{proof}

Especially, we deduce that all arc graphs are connected by the existence of unicorn paths.

\begin{cor}
$\mathcal{A}(N)$ is connected.
\end{cor}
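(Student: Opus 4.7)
The plan is to reduce the assertion directly to Proposition~\ref{rem1}. Given any two vertices $a, b \in \mathcal{A}(N)$, I would represent them by properly embedded essential arcs on $N$ that have been isotoped into minimal position (which is always possible; Proposition~\ref{minimal_position} guarantees we can do this without creating bigons or half-bigons). If the representatives happen to be disjoint, then $a$ and $b$ already span an edge of $\mathcal{A}(N)$ and there is nothing to prove.

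Otherwise, choose any endpoint $\alpha$ of $a$ and any endpoint $\beta$ of $b$, and form the unicorn path
\[
\mathcal{P}(a^{\alpha}, b^{\beta}) = (a = c_0, c_1, \ldots, c_{n-1}, c_n = b).
\]
By Proposition~\ref{rem1}, each consecutive pair $c_{i-1}, c_i$ represents a pair of adjacent vertices of $\mathcal{A}(N)$. Concatenating these edges produces an edge-path in $\mathcal{A}(N)$ from $a$ to $b$. Since $a$ and $b$ were arbitrary, this shows $\mathcal{A}(N)$ is connected.

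All of the real work has already been done in constructing unicorn paths and in the proof of Proposition~\ref{rem1}, so there is no serious obstacle here. The only subtlety is the mild case distinction at the beginning: unicorn arcs are defined using intersection points in $a \cap b$, so the non-degenerate construction requires $a \cap b \neq \emptyset$; this is harmless because the disjoint case is exactly the case in which $a$ and $b$ are already adjacent in $\mathcal{A}(N)$.
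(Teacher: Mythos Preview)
Your proposal is correct and is exactly the argument the paper has in mind: the paper simply states that connectedness of $\mathcal{A}(N)$ follows from the existence of unicorn paths, and your write-up spells this out with the appropriate case distinction and appeal to Proposition~\ref{rem1}.
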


\begin{lem}{\rm(cf.} \cite[Lemma 3.3]{HPW}{\rm)}\label{key1}
Let $a$, $b$, and $d$ be three arcs on $N$ which are mutually in minimal position, and let $\alpha$, $\beta$, and $\delta$ be one of the endpoints of $a$, $b$, and $d$. For each $c\in \mathcal{P}(a^{\alpha} ,b^{\beta } )$, there exists $c^{*}\in \mathcal{P}(a^{\alpha} ,d^{\delta}) \cup \mathcal{P}(b^{\beta}, d^{\delta})$, such that $c$, $c^{*}$ represent adjacent vertices of $\mathcal{A}(N)$. 
\end{lem}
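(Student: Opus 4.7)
The plan is to adapt the argument of Hensel-Przytycki-Webb \cite[Lemma 3.3]{HPW} to the non-orientable setting after observing that non-orientability plays no role in the construction. The trivial cases are immediate: if $c=a$ take $c^{*}=a\in\mathcal{P}(a^{\alpha},d^{\delta})$, and if $c=b$ take $c^{*}=b\in\mathcal{P}(b^{\beta},d^{\delta})$. So assume $c=a'\cup b'$ is an intermediate unicorn arc, with $a'=\overline{\alpha\pi}_{a}$ and $b'=\overline{\pi\beta}_{b}$ for some $\pi\in a\cap b$.

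Starting from $\delta$, walk along $d$ toward its other endpoint and let $y$ be the first point of $d\cap c$ encountered. If no such $y$ exists, then $d$ is already disjoint from $c$ and we take $c^{*}=d\in\mathcal{P}(a^{\alpha},d^{\delta})$. Otherwise either $y\in a'$ or $y\in b'$, and by the symmetric roles of $a$ and $b$ it suffices to treat the case $y\in a'$, in which $y\in a\cap d$. Define $c^{*}=\overline{\alpha y}_{a}\cup\overline{y\delta}_{d}$. Since $\overline{y\delta}_{d}\cap c=\{y\}$ by the choice of $y$ and $\overline{\alpha y}_{a}\subseteq c$, we have $\overline{\alpha y}_{a}\cap\overline{y\delta}_{d}=\{y\}$; thus $c^{*}$ is an embedded arc, and hence a unicorn arc in $\mathcal{P}(a^{\alpha},d^{\delta})$ determined by the intersection point $y$.

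It then remains to show that $c$ and $c^{*}$ admit disjoint representatives on $N$, so that they are adjacent in $\mathcal{A}(N)$. The two arcs agree along $\overline{\alpha y}_{a}$, so my plan is to leave $\overline{y\delta}_{d}$ in place and replace $\overline{\alpha y}_{a}$ by a parallel copy contained in a thin regular neighborhood of $\overline{\alpha y}_{a}$ on the side of $a$ from which $\overline{y\delta}_{d}$ emerges at $y$, letting the endpoint slide slightly along $\partial N$ away from $\alpha$. For a sufficiently thin neighborhood this parallel copy avoids $\overline{y\pi}_{a}\subseteq a$ because it lies strictly off $a$, and avoids $b'$ because $a'\cap b'=\{\pi\}$ and $\pi\notin\overline{\alpha y}_{a}$, so a small enough tubular neighborhood of $\overline{\alpha y}_{a}$ is disjoint from $b'$.

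The main point requiring care is the local side-choice near $y$: the side on which the parallel copy of $\overline{\alpha y}_{a}$ lies must match the quadrant of the transverse crossing $a\cap d$ at $y$ from which $\overline{y\delta}_{d}$ departs, so that the concatenation is a single properly embedded arc homotopic to $c^{*}$. Because the construction is entirely local around embedded arcs in minimal position and invokes only the no-bigon/no-half-bigon criterion of Proposition~\ref{minimal_position}, the non-orientability of $N$ never intervenes, which is why the HPW argument transfers directly.
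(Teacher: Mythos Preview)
Your proof is correct and follows exactly the paper's construction: walk along $d$ from $\delta$ to the first point $y=\sigma$ where it meets $c$, set $c^{*}=\overline{\alpha y}_{a}\cup\overline{y\delta}_{d}$ (or the symmetric arc through $b$), and push off along the shared subarc to realize disjointness. The only nitpick is that in your ``trivial'' cases $c=a$ and $c=b$ the choice $c^{*}=c$ does not produce \emph{adjacent} (i.e., distinct) vertices; simply drop the special casing, since your general construction already handles $c=a$ and $c=b$ uniformly.
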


\begin{proof}
For any $c\in \mathcal{P}(a^{\alpha} ,b^{\beta } )$, let $c=a'\cup b'$.
If $c\cap d=\emptyset$, then we take $c^{*}=d$. 
When $c\cap d\not=\emptyset$, we assume that $d'$ is the maximal subarc of $d$ with endpoint $\delta $ whose interior is disjoint from $c$, and $\sigma$ is the other endpoint of $d'$.
Thus $d'=\overline{\sigma \delta }_{d}$
Then $\sigma \in a'$ or $\sigma \in b'$, and without loss of generality, we can assume that $\sigma \in a'$.
By taking $c^{*}=\overline{\alpha \sigma }_{a}\cup \overline{\sigma \delta }_{d}$, we see that $c^{*}$ and $c$ represent adjacent vertices of $\mathcal{A}(N)$.
\end{proof}

Note that $c$ and $d$ may not be in minimal position.

\begin{lem}{\rm(cf.} \cite[Lemma 3.4]{HPW}{\rm)}\label{key2}
Let $a$, $b$, and $d$ be three arcs on $N$ which are mutually in minimal position, and let $\alpha$, $\beta$, and $\delta$ be one of the endpoints of $a$, $b$, and $d$. Then there exist $c^{1}\in\mathcal{P}(a^{\alpha} ,b^{\beta})$, $c^{2}\in\mathcal{P}(b^{\beta}, d^{\delta})$, and $c^{3}\in\mathcal{P}(d^{\delta}, a^{\alpha})$ such that $c^{i}$ and $c^{j}$ $(i\not=j, i, j=1,2,3)$ represent adjacent vertices of $\mathcal{A}(N)$.
\end{lem}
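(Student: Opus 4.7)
My plan is to iterate Lemma~\ref{key1} along the whole path $\mathcal{P}(a^\alpha, b^\beta) = (c_0 = a, c_1, \ldots, c_n = b)$ and then run a discrete intermediate-value argument. Applying Lemma~\ref{key1} to each $c_i$ produces a partner $c_i^{*}\in \mathcal{P}(a^\alpha, d^\delta)\cup \mathcal{P}(b^\beta, d^\delta)$ that is adjacent to $c_i$ in $\mathcal{A}(N)$. Reading off the explicit construction in the proof of Lemma~\ref{key1}, the partner lies in $\mathcal{P}(a^\alpha, d^\delta)$ or in $\mathcal{P}(b^\beta, d^\delta)$ according as the first intersection $\sigma_i$ of $d$ (travelling from $\delta$) with $c_i = a'_i \cup b'_i$ falls on the $a'_i$-side or on the $b'_i$-side. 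Since $c_0 = a$ has no $b$-portion and $c_n = b$ has no $a$-portion, necessarily $c_0^{*} \in \mathcal{P}(a^\alpha, d^\delta)$ and $c_n^{*} \in \mathcal{P}(b^\beta, d^\delta)$, so there must exist a transition index $i$ with
\[
c_i^{*}\in\mathcal{P}(a^\alpha,d^\delta)\quad\text{and}\quad c_{i+1}^{*}\in\mathcal{P}(b^\beta,d^\delta).
\]

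At this transition I take $c^{1}=c_i$, $c^{3}=c_i^{*}=\overline{\alpha\sigma_i}_{a}\cup\overline{\sigma_i\delta}_{d}$, and $c^{2}=c_{i+1}^{*}=\overline{\beta\sigma_{i+1}}_{b}\cup\overline{\sigma_{i+1}\delta}_{d}$. Adjacency $c^{1}\sim c^{3}$ is Lemma~\ref{key1}, and the other two adjacencies I would establish by a neighbourhood pushoff in the style of Proposition~\ref{rem1}. For $c^{2}\sim c^{3}$, the arcs share a common $d$-tail ending at $\delta$; outside that tail, $c^{3}$ runs along $a$ while $c^{2}$ runs along $b$, and the off-$d$ pieces can meet only in $a\cap b$, which is controlled by the minimality of $\sigma_i$, $\sigma_{i+1}$ as the first $d$-hits on $c_i$, $c_{i+1}$. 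For $c^{1}\sim c^{2}$ I would exploit the inclusions $a'_{i+1}\subset a'_i$ and $b'_i\subset b'_{i+1}$ that characterize consecutive unicorn arcs, so that $\overline{\beta\sigma_{i+1}}_{b}$ meets $c_i=a'_i\cup b'_i$ only inside $b'_i\subset b'_{i+1}$ (where it is a common subarc of $b$), while $\overline{\sigma_{i+1}\delta}_{d}$ is disjoint from $c_{i+1}$ and hence from the shared portion $a'_{i+1}\cup b'_i$ of $c_i$; the remaining $a$-segment $\overline{\pi_{i+1}\pi_i}_{a}\subset a'_i\setminus a'_{i+1}$ can be avoided by the same small pushoff used in Proposition~\ref{rem1}.

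The main obstacle I foresee is verifying cleanly that the pushoffs turning ``disjoint up to shared endpoints and subarcs'' into genuine disjointness of isotopy-class representatives do not re-introduce crossings either between $c^{2}$ and $c^{3}$ or with $d$ near $\delta$, and disposing of degenerate cases (for example when $\sigma_i=\sigma_{i+1}$, or when one of $c_i,c_{i+1}$ is already disjoint from $d$ so that Lemma~\ref{key1} returns $d$ itself as the partner and the transition index collapses). As in \cite{HPW}, the non-orientability of $N$ plays no role in the combinatorial bookkeeping beyond the existence of minimal position supplied by Proposition~\ref{minimal_position}.
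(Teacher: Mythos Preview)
Your strategy is the same as the paper's: walk along $\mathcal{P}(a^\alpha,b^\beta)$, apply Lemma~\ref{key1} at each step, and stop at the first index where the partner arc switches from the $(a,d)$-side to the $(b,d)$-side. The paper phrases this as an iteration (``go back to the beginning changing $i$ to $i+1$'') rather than an intermediate-value argument, but it is the same mechanism.

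Where your proposal has a genuine gap is in the fixed choice $c^{1}=c_{i}$ at the transition. Suppose $\operatorname{Int}(d_i)$ misses $b'_{i+1}$ (the case the paper singles out). Then the first $d$-hit $\sigma_{i+1}$ on $c_{i+1}$ lies \emph{beyond} $\sigma_i$ on $d$, so the $d$-portion $\overline{\sigma_{i+1}\delta}_d$ of your $c^{2}$ strictly contains $d_i$ and the extra segment $\overline{\sigma_{i+1}\sigma_i}_d$ can cross $c_i$ transversally (nothing prevents further hits of $d$ on $c_i$ past $\sigma_i$). Your pushoff in the style of Proposition~\ref{rem1} does not remove these crossings, because they are genuine $d\cap c_i$ points, not endpoint artefacts. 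Hence $c^{1}\sim c^{2}$ can fail with your choice. The paper handles exactly this by taking $c^{1}=c_{i+1}$ (so that $c^{1}\sim c^{2}$ is Lemma~\ref{key1} on the nose) while keeping $c^{3}=\overline{\alpha\sigma_i}_a\cup d_i$; the adjacency $c_{i+1}\sim c^{3}$ then follows because $d_i$ now misses $c_{i+1}$ as well, and $\overline{\alpha\sigma_i}_a\cap b'_{i+1}\subset\{\tau_{i+1}\}$ by the embeddedness of $c_{i+1}$ together with the usual consecutive-unicorn disjointness. So the fix is not more pushoffs but the finer case split on whether $\sigma_{i+1}$ lands inside $d_i$ or beyond it, with $c^{1}$ chosen accordingly; once you add that split your outline becomes the paper's proof.
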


\begin{proof}
First, suppose that two of $a$, $b$, $d$ are disjoint.
Without loss of generality, we can assume that $a$ and $b$ are disjoint. Let $d'=\overline{\delta \pi }_{d}$ be the maximal subarc of $d$ whose interior is disjoint from $a\cup b$. 
Then $\pi \in a$ or $\pi \in b$, and here we assume that $\pi \in a$.
It is sufficient to take $c^{1}=a$, $c^{2}=b$, and $c^{3}=\overline{\delta\pi}_{d}\cup \overline{\pi\alpha}_{a}$. 
Otherwise, that is, when any two of $a$, $b$, $d$ intersect transversely, for any unicorn arc $c_{i}\in\mathcal{P}(a^{\alpha} ,b^{\beta } )$ ($0\leq i\leq n-1$), denote by $d_{i}=\overline{\pi _{i}\delta }_{d}$ the subarc of $d$ whose interior is disjoint from $c_{i}$.
Set $c_{i}=a_{i}\cup b_{i}$.
Then $\pi _{i}\in a_{i}$ or $\pi _{i}\in b_{i}$.
Here we assume that $\pi _{i}\in a_{i}$. 
In the case where $b_{i+1}$ and the interior of $d_{i}$ are not disjoint, let $\varepsilon$ be the intersection point between $d_{i}$ and $b_{i+1}$ which is closest to $\delta$ along $d$.
Then we take $c^{1}=c_{i}$, $c^{2}=\overline{\beta\varepsilon}_{b}\cup \overline{\varepsilon\delta}_{d}$, and $c^{3}=\overline{\delta\pi _{i}}_{d}\cup \overline{\pi _{i}\alpha }_{a}$.
In the case where $b_{i+1}$ and the interior of $d_{i}$ are disjoint, let $\sigma $ be the intersection point between $(d-{\rm Int}(d_{i}))$ and $c_{i+1}$ which is nearest to $\pi _{i}$ along $d$, where Int($d_{i}$) is the interior of $d_{i}$. 
If $\sigma \in a_{i+1}$, then we go back to the beginning of this proof changing $i$ to $i+1$, since we can not take three arcs satisfy the statement of Lemma~\ref{key2}.
If $\sigma \in b_{i+1}$, then let $\pi '$ be the intersection point between $\overline{\sigma \delta }_{d}$ and $a_{i}$ which is closest to $\alpha $ along $a$.
Then we take $c^{1}=c_{i+1}$, $c^{2}=\overline{\beta \sigma }_{b}\cup \overline{\sigma\delta}_{d}$, and $c^{3}=\overline{\delta \pi _{i}}_{d}\cup \overline{\pi _{i}\alpha}_{a}$. 
Finally, we have to consider the case where $(d-{\rm Int}(d_{i}))\cap c_{i+1}$ is empty.
Let $\pi '$ be the intersection point between $d$ and $a_{i}$ which is closest to $\alpha$ along $a$. Then we take $c^{1}=c_{i+1}$, $c^{2}=d$, and $c^{3}=\overline{\delta \pi'}_{d}\cup \overline{\pi '\alpha}_{a}$, and so we are done.
\end{proof}

We now prove that unicorn paths are invariant under taking subpaths, up to one exception.

\begin{lem}{\rm(cf.} \cite[Lemma 3.5]{HPW}{\rm)}\label{key3}
For every $0\leq i< j\leq n$, either $\mathcal{P}(c_{i}^{\alpha}, c_{j}^{\beta})$ is a subpath of $\mathcal{P}(a^{\alpha}, b^{\beta})$,
or $c_{i}$, $c_{j}$ represent adjacent vertices of $\mathcal{A}(N)$ when $j=i+2$.
\end{lem}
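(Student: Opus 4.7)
The plan is to show that the intersections of $c_i$ and $c_j$ that produce embedded unicorn arcs are in natural bijection with the intermediate unicorn arcs $c_{i+1},\ldots,c_{j-1}$. Writing $c_i=a_i\cup b_i$ and $c_j=a_j\cup b_j$ with $\pi_i,\pi_j\in a\cap b$ their distinguished intersection points, the total order on unicorn arcs (together with $i<j$) gives $a_j\subset a_i$ and $b_i\subset b_j$. Hence $c_i$ and $c_j$ share the subarcs $a_j$ (near $\alpha$) and $b_i$ (near $\beta$), while the remaining pieces $\overline{\pi_j\pi_i}_a\subset c_i$ and $\overline{\pi_j\pi_i}_b\subset c_j$ together bound a closed loop in $N$.

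Next I would argue that every transverse intersection of $c_i$ and $c_j$ is an intersection of $a$ and $b$ lying in the loop region $\overline{\pi_j\pi_i}_a\cap\overline{\pi_j\pi_i}_b$. Any hypothetical crossing involving one of the shared subarcs $a_j$ or $b_i$ would force $c_i=a_i\cup b_i$ or $c_j=a_j\cup b_j$ to pass through the same point of $N$ twice, contradicting embeddedness. I would then check that $c_i$ and $c_j$ are in minimal position: a bigon or half-bigon between them is bounded by subarcs of $\overline{\pi_j\pi_i}_a$ and $\overline{\pi_j\pi_i}_b$, hence is simultaneously a bigon or half-bigon between $a$ and $b$, which is ruled out by Proposition~\ref{minimal_position}.

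For any transverse intersection $\pi^{*}\in c_i\cap c_j$, the candidate unicorn arc from $c_i^{\alpha}$ and $c_j^{\beta}$ at $\pi^{*}$ is $\overline{\alpha\pi^{*}}_{c_i}\cup\overline{\pi^{*}\beta}_{c_j}=\overline{\alpha\pi^{*}}_a\cup\overline{\pi^{*}\beta}_b$, which coincides with the candidate unicorn arc from $a^{\alpha}$ and $b^{\beta}$ at $\pi^{*}$. Embeddedness then forces $\pi^{*}=\pi_k$ for some $k$ with $i<k<j$, and the resulting arc is exactly $c_k$. Since the containments $a_{k'}\subset a_k$ and $b_k\subset b_{k'}$ hold for $i<k<k'<j$, the induced ordering on $\{c_{i+1},\ldots,c_{j-1}\}$ agrees with the order inherited from $\mathcal{P}(a^{\alpha},b^{\beta})$, yielding $\mathcal{P}(c_i^{\alpha},c_j^{\beta})=(c_i,c_{i+1},\ldots,c_j)$, a subpath of $\mathcal{P}(a^{\alpha},b^{\beta})$.

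The ``or'' in the statement is included to cover the degenerate case $j=i+2$, where one might have no transverse intersections at all between $c_i$ and $c_j$ once they are put in minimal position, so that the two unicorn arcs are in fact adjacent in $\mathcal{A}(N)$ and the unicorn path collapses to $(c_i,c_j)$. The step I expect to be most delicate is the verification of minimal position for $c_i$ and $c_j$ in the non-orientable setting, where one has to exclude half-bigons that might a priori arise from the crosscaps; the crux is to observe that the boundary of any such half-bigon is built from subarcs of $a$ and $b$, so Proposition~\ref{minimal_position} applied to $a$ and $b$ suffices.
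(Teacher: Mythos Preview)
Your bijection between transverse intersections in the ``loop region'' and the intermediate unicorn arcs $c_{i+1},\ldots,c_{j-1}$ is correct, but the proof breaks at the minimal-position step, and that step is the entire content of the lemma.

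The flaw is the claim that any bigon or half-bigon between $c_i$ and $c_j$ ``is bounded by subarcs of $\overline{\pi_j\pi_i}_a$ and $\overline{\pi_j\pi_i}_b$, hence is simultaneously a bigon or half-bigon between $a$ and $b$.'' A side of such a half-bigon lies in $c_j=a_j\cup b_j$, so it can be a \emph{concatenation} of a subarc of $a$ and a subarc of $b$; viewed in terms of $a$ and $b$ the region is then a polygon with three or more $a$/$b$-sides, and Proposition~\ref{minimal_position} says nothing about those. Concretely, take $i=0$ (so $c_i=a$, which has second endpoint $\alpha'$, not $\beta$; your setup already fails here since there is no $b_i$ and no $\pi_i$). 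Push the $a_j$-part of $c_j$ off $a$ toward $\beta$ to get a transverse representative $\tilde c$. Then $\tilde c$ and $a$ can bound a half-bigon whose $a$-side runs from $\alpha'$ to some $\pi'\in a\cap b$ and whose $\tilde c$-side runs from $\tilde\alpha$ along the pushed $a_j$ and then along $b$ to $\pi'$. This is exactly the configuration isolated in the paper's Sublemma~\ref{sub_key3}, and it is \emph{not} a bigon or half-bigon between $a$ and $b$.

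This is not a harmless technicality: it is precisely the mechanism that produces the exceptional clause. In that situation the transverse intersection $\pi'$ sits inside the half-bigon; your bijection would record it as the point giving $c_{i+1}$ and conclude $\mathcal{P}(c_i^\alpha,c_j^\beta)=(c_i,c_{i+1},c_j)$. But once one homotopes through the half-bigon to reach minimal position, $\pi'$ disappears and $c_i,c_j$ are disjoint, so the true unicorn path is $(c_i,c_j)$. Thus the exception is not the ``degenerate case where one might have no transverse intersections'' as you describe; it is the case where the naive count overcounts by one because of a non-obvious half-bigon. The paper handles this by first reducing to $i=0$, $j=n-1$ and then proving Sublemma~\ref{sub_key3}, which shows that at most one such half-bigon can occur and pins down exactly when it does; you would need an analogue of that analysis to close the gap.
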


Before we prove Lemma~\ref{key3}, we need the following.

\begin{sublem}{\rm(cf.} \cite[Sublemma 3.6]{HPW}{\rm)}\label{sub_key3}
Let $\alpha$ and $\alpha '$ be the endpoints of $a$.
Let $c=c_{n-1}\in \mathcal{P}(a^{\alpha}, b^{\beta})$, which means that $c=a'\cup b'$ with the interior of $a'$ disjoint from $b$.
Let $\tilde{c}$ be the arc homotopic to $c$ obtained by homotopying $a'$ slightly off $a$ in the direction toward $\beta$ so that $a'\cap \tilde{c}=\emptyset$.
Then either $\tilde{c}$ and $a$ are in minimal position, or they bound exactly one half-bigon shown in Figure~\ref{fig_subkey3} (the shaded region is the half bigon).
In that case, after homotopying $\tilde{c}$ through that half-bigon to $\bar{c}$, the arcs $\bar{c}$ and $a$ are already in minimal position.   
\end{sublem}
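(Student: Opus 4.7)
The plan is to leverage the minimal position of $a$ and $b$ and translate every potential bigon or half-bigon between $\tilde{c}$ and $a$ back to one between $a$ and $b$, forcing a contradiction with Proposition~\ref{minimal_position} in all but one case.

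First, write $\tilde{c} = \tilde{a}' \cup \overline{\tilde{\pi}\beta}_{b}$, where $\tilde{\pi}$ is the point on $b$ just past $\pi$ toward $\beta$. The intersection set $\tilde{c} \cap a$ consists exactly of the intersection points of $b'$ with $a$ other than $\pi$, since $\tilde{a}'$ is disjoint from $a$ by construction. As $\tilde{a}'$ is the terminal segment of $\tilde{c}$ at the endpoint $\tilde{\alpha}$, any subarc of $\tilde{c}$ joining two interior intersection points of $\tilde{c} \cap a$ lies entirely in $\overline{\tilde{\pi}\beta}_{b} \subset b$. Thus any bigon between $\tilde{c}$ and $a$ is a bigon between $a$ and $b$, forbidden by Proposition~\ref{minimal_position}.

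For a half-bigon $H$ with corners $\pi_{H} \in b' \cap a$, endpoint $p$ of $\tilde{c}$, and endpoint $q$ of $a$: the case $p = \beta$ again yields a half-bigon between $a$ and $b$, so $p = \tilde{\alpha}$ and the $\tilde{c}$-side of $H$ is $\tilde{a}' \cup \overline{\tilde{\pi}\pi_{H}}_{b}$. Reversing the push-off by gluing the thin strip between $a'$ and $\tilde{a}'$ back onto $H$ along $\tilde{a}' \cup \overline{\alpha\tilde{\alpha}}_{\partial N}$ produces a disk whose boundary simplifies to a closed curve lying in $a \cup b$. Requiring this curve to bound an embedded disk forces $q = \alpha$ and $\pi_{H} = \pi_{1}$, the first intersection of $\overline{\pi\beta}_{b}$ with $a$ past $\pi$; any other choice would produce an embedded bigon or half-bigon between $a$ and $b$, contradicting Proposition~\ref{minimal_position}. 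This uniquely identifies $H$ as the shaded half-bigon of Figure~\ref{fig_subkey3}.

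Finally, resolving $H$ replaces $\tilde{c}_{0} = \tilde{a}' \cup \overline{\tilde{\pi}\pi_{1}}_{b}$ by a push-off of $\overline{\alpha\pi_{1}}_{a}$, dropping $|\tilde{c} \cap a|$ by one. Any further bigon or half-bigon between $\bar{c}$ and $a$ would, by the same shrinking argument applied to both successive push-offs, transfer to one between $a$ and $b$, again contradicting minimal position, so $\bar{c}$ and $a$ are in minimal position. The main obstacle I anticipate is the case analysis in the previous paragraph forcing $q = \alpha$ and $\pi_{H} = \pi_{1}$, which requires careful tracking of which closed curves in $a \cup b$ actually bound embedded disks on $N$. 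These arguments are local and transfer without modification from the orientable to the non-orientable setting, relying only on the available Proposition~\ref{minimal_position}.
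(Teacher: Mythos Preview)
Your overall strategy—reducing every bigon or half-bigon between $\tilde{c}$ and $a$ to one between $a$ and $b$ via Proposition~\ref{minimal_position}—is exactly the paper's approach, and your treatment of bigons and of the case $p=\beta$ is correct. However, there is a genuine error in your identification of the half-bigon when $p=\tilde{\alpha}$.

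You claim the $a$-endpoint is forced to be $q=\alpha$. In fact the opposite holds: $q=\alpha'$. Run your own ``undo the push-off'' argument in the case $q=\alpha$. The $a$-side $\overline{\pi_{H}\alpha}_{a}$ then contains $a'=\overline{\alpha\pi}_{a}$ (since $\pi_{H}\in a\setminus a'$), so after replacing $\tilde{a}'$ by $a'$ the boundary of the glued disk is
\[
a'\;\cup\;\overline{\pi\pi_{H}}_{b}\;\cup\;\overline{\pi_{H}\alpha}_{a}
\;=\;\overline{\alpha\pi}_{a}\;\cup\;\overline{\pi\pi_{H}}_{b}\;\cup\;\overline{\pi_{H}\pi}_{a}\;\cup\;\overline{\pi\alpha}_{a},
\]
and the two copies of $a'$ cancel, leaving the bigon $\overline{\pi\pi_{H}}_{b}\cup\overline{\pi_{H}\pi}_{a}$ between $a$ and $b$. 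This is precisely the contradiction that \emph{rules out} $q=\alpha$; it does not force it. The surviving case is $q=\alpha'$, and this is the half-bigon drawn in Figure~\ref{fig_subkey3}. (One also checks, as the paper does, that the interior of the $a$-side $a''=\overline{\pi'\alpha'}_{a}$ is disjoint from $b$, which together with consecutiveness of $\pi,\pi'$ on $b$ pins down the half-bigon uniquely.)

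This error propagates: your $\bar{c}$ is described as containing a push-off of $\overline{\alpha\pi_{1}}_{a}$, but after resolving the correct half-bigon it is a push-off of $a''\cup b''=\overline{\alpha'\pi'}_{a}\cup\overline{\pi'\beta}_{b}$, with endpoint near $\alpha'$ rather than $\alpha$. The paper then exploits this symmetry: $\bar{c}$ has exactly the same form as $\tilde{c}$ with the roles of $\alpha$ and $\alpha'$ interchanged, so re-running the argument shows that any further half-bigon would have $a$-side $a'''$ with endpoint $\alpha$ and interior disjoint from $b$; but $a'\subset a'''$ forces $\pi\in\mathrm{int}(a''')\cap b$, a contradiction. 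Your final paragraph gestures at ``the same shrinking argument applied to both successive push-offs'' but, because your $\bar{c}$ is misidentified, that symmetry is not available and the sketch does not go through as written.
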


\begin{figure}[h]
\includegraphics[scale=0.90]{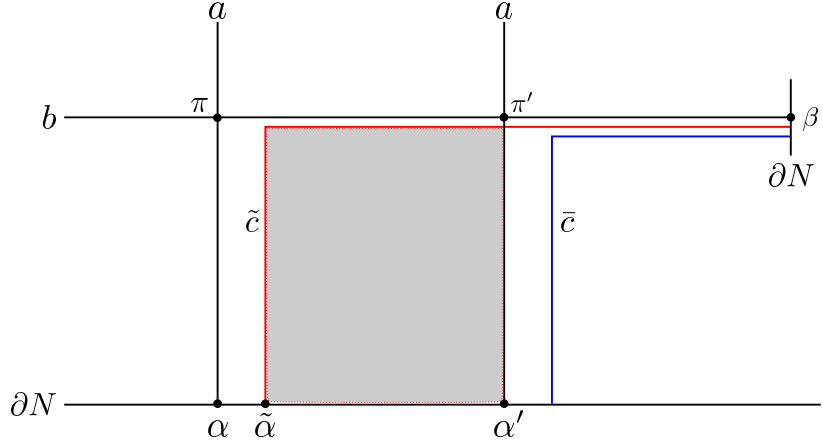}
\caption{The only possible half-bigon between $\tilde{c}$ and $a$.}\label{fig_subkey3}
\end{figure}

\begin{proof}[Proof of Sublemma~\ref{sub_key3}]
Let $\tilde{\alpha}$ be the endpoint of $\tilde{c}$ corresponding to $\alpha $ of $c$. 
When $\tilde{c}$ and $a$ are not in minimal position, $\tilde{c}$ and $a$ bound bigons or half-bigons. 
If $\tilde{c}$ and $a$ bound a bigon, then $a$ and $b$ also bound the bigon. 
This contradicts the assumption that $a$ and $b$ are in minimal position. 
Therefore, $\tilde{c}$ and $a$ do not bound any bigons but a half-bigon $\tilde{c}'a''$, where $\tilde{c}'\subset \tilde{c} $ and $a''\subset a'$. 
Let $\pi'=\tilde{c}'\cap a''$. 
The endpoint of $\tilde{c}'$ which is distinct from $\pi'$ is $\tilde{\alpha }$. 
Indeed, assume that the endpoint of $\tilde{c}'$ is $\beta $. 
Then $a$ and $b$ form a half-bigon using $\beta$, one of the endpoints of $a$, and $\pi'\in a\cap b$. 
This contradicts the assumption that $a$ and $b$ are in minimal position. 
On the other hand, the endpoint of $a''$ which is distinct from $\pi'$ is $\alpha'$. 
Indeed, assume that the endpoint of $a''$ is $\alpha $. 
Then $\tilde{c}'=\overline{\tilde{\alpha }\pi'}_{\tilde{c} }$ and $a''=\overline{\pi'\alpha}_{a}$ form a half-bigon and $\pi\in a\cap b$ is contained in $a''$.
Hence, $a$ and $b$ form a bigon, and this contradicts the assumption that $a$ and $b$ are in minimal position.  
The interior of $a''$ is disjoint from $b$, since the interior of $a'$ is disjoint from $b$, and since $a$ and $b$ are in minimal position. 
Moreover, $\pi $ and $\pi '$ are consecutive intersection points with $a$ on $b$. 
Hence, $\tilde{c}$ and $a$ bound exactly one half-bigon shown in Figure~\ref{fig_subkey3}. 

Let $b''$ be the component of $b-\{\pi '\}$ containing $\beta$, that is, set $b''=\overline{\pi '\beta }_{b}$. 
Let $\bar{c}$ be an arc obtained from $a''\cup b''$ by homotopying it slightly off $a''$ in the direction toward $\beta$.   
Since the endpoint of $a''$ which is distinct from $\pi'$ is $\alpha'$ and the interior of $a''$ is disjoint from $b$, the condition of $\bar{c}$ is the same as that of $\tilde{c}$. 
Applying to $\bar{c}$ the same argument as to $\tilde{c}$, but with the endpoint of $a$ interchanged, it follows that either $\bar{c}$ and $a$ are in minimal position, or they bound exactly one half-bigon $\bar{c}'a'''$, where $\bar{c}'\subset \bar{c}$ and $a'''\subset a$. 
In the latter case, we get $a'\subset a'''$, in particular $\pi \in a'''$. 
This contradicts the fact that the interior of $a'''$ should be disjoint from $b$. 
Since $\pi$ and $\pi'$ are consecutive intersection points with $a$ on $b$, $\bar{c}$ is homotopic to $\tilde{c}$, and so $\bar{c}$ and $\tilde{c}$ are representatives of the same element in $\mathcal{A}(N)$, as desired.  
\end{proof}

\begin{proof}[Proof of Lemma~\ref{key3}]
We can assume that $i=0$ and $j=n-1$. Hence, $c_{i}=c_{0}=a$ and $c_{j+1}=c_{n}=b$. 
We set $c_{j}=a'\cup b'$ ($=c_{n-1}$), where $a'$ and $b'$ are subarcs of $a$ and $b$. 
Then we see that $a'$ intersects $b$ only once at its endpoint $\pi $ distinct from $\alpha$.  
Let $\tilde{c}$ be the arc obtained from $c_{j}$ as in Sublemma~\ref{sub_key3}, and $\beta'$ the other endpoint of $b$. 
We note that the points of $a\cap b$ on $\overline{\pi \beta'}_{b}$ do not determine any unicorn arcs obtained from $a^{\alpha}$ and $b^{\beta}$. 

When $\tilde{c}$ and $a$ are in minimal position, the points of $(a\cap b)-\{\pi \}$ determining unicorn arcs in $\mathcal{P}(a^{\alpha }, b^{\beta })$ give all unicorn arcs in $\mathcal{P}(a^{\alpha }, \tilde{c}^{\beta } )$, since $a\cap \tilde{c}$ is coincident with $(a\cap \overline{\pi \beta'}_{b})-\{\pi \}$. Hence, in this case, $\mathcal{P}(c_{0}^{\alpha}, c_{n-1}^{\beta})$ becomes a subpath of $\mathcal{P}(a^{\alpha}, b^{\beta})$.

Suppose that $\tilde{c}$ and $a$ are not in minimal position. 
Let $\bar{c}$ be the arc from Sublemma~\ref{sub_key3} which is homotopic to $c_{j}$ and in minimal position with $a$. 
Let $\pi'$ be the point of $a\cap b$ with the same setting as in Sublemma~\ref{sub_key3}. 
Let $a''=\overline{\alpha\pi'}_{a}$ and $b''=\overline{\pi' \beta}_{b}$. 
We set $a^{*}=a-a''$. 
Suppose that $a^{*}$ and $b''$ intersect outside of $\pi'$. 
The points of $(a\cap b)-\{\pi, \pi'\}$ determining unicorn arcs in $\mathcal{P}(a^{\alpha }, b^{\beta })$ give all unicorn arcs in $\mathcal{P}(a^{\alpha }, \bar{c}^{\beta})$, since $a\cap \bar{c}$ is coincident with $(a\cap \overline{\pi' \beta}_{b})-\{\pi' \}$. 
Hence, in this case, $\mathcal{P}(c_{0}^{\alpha}, c_{n-1}^{\beta})$ becomes a subpath of $\mathcal{P}(a^{\alpha}, b^{\beta})$.
Suppose that $a^{*}\cap b=\{\pi' \}$. 
Then, $c_{0}=a$, $c_{1}=\overline{\alpha \pi '}_{a}\cup \overline{\pi '\beta }_{b}$, $c_{2}=\overline{\alpha \pi }_{a}\cup \overline{\pi \beta}_{b}$, and $c_{3}=b$ are all unicorn arcs obtained from $a^{\alpha}$ and $b^{\beta}$. 
Then, we get $a=c_{0}$ and $\bar{c}$ are disjoint.
Recall that $\bar{c}$ is homotopic to $c_{j}$ (now it follows that $j=2$).
Hence, $c_{0}$ and $c_{2}$ represent adjacent vertices of $\mathcal{A}(N)$, as desired.
\end{proof}

\begin{rem}
Slightly abusing the notation, we consider vertices $a$, $b$ of $\mathcal{A}(N)$, $\mathcal{C}(N)$, and $\mathcal{AC}(N)$ as arcs or curves $a$, $b$ on $N$ which are in minimal position from now on.
\end{rem}

\section{Arc graphs are uniformly hyperbolic}

\begin{defn}
We define the following family $P(a, b)$ of unicorn paths to a pair of vertices $a$, $b$ in $\mathcal{A}(N)$.
Let $(a,b)$ be an edge in $\mathcal{A}(N)$ connecting $a$ and $b$.
Let $\alpha _{+}$ and $\alpha _{-}$ be the endpoints of $a$, and $\beta _{+}$ and $\beta _{-}$ the endpoints of $b$.
Then, we define

$P(a, b)=
\begin{cases}
\{ (a, b)\} & {\rm if} \ a\cap b=\emptyset,\\
\{{\mathcal P}(a^{\alpha_{+} },b^{\beta_{+} }),{\mathcal P}(a^{\alpha_{+} },b^{\beta_{-}}),{\mathcal P}(a^{\alpha_{-} },b^{\beta_{+}}),{\mathcal P}(a^{\alpha_{-} },b^{\beta_{-} })\} & {\rm if} \ a\cap b\not=\emptyset.
\end{cases}$
\end{defn}

\begin{prop}{\rm(cf.} \cite[Proposition 4.2]{HPW}{\rm)}\label{distances_between_unipaths_and_geodesic}
Let $\mathcal{G}$ be a geodesic in $\mathcal{A}(N)$ between vertices $a$ and $b$.
Then any unicorn arc $c\in \mathcal{P}\in P(a, b)$ is at distance $\leq 6$ from ${\mathcal G}$.
\end{prop}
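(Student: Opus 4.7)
The plan is to prove the proposition by induction on $k = d_{\mathcal{A}}(a, b)$, the length of the geodesic $\mathcal{G}$.

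For the base case, when $k \leq 1$, either $a = b$ or $a \cap b = \emptyset$; in the disjoint case, the definition of $P(a,b)$ sets $\mathcal{P} = (a,b)$, which coincides with $\mathcal{G}$, so every unicorn arc already lies on $\mathcal{G}$ and the bound is trivial.

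For the inductive step, assume the result holds for all geodesics of length strictly less than $k$, and let $m$ be the vertex of $\mathcal{G}$ at distance $\lceil k/2 \rceil$ from $a$. After putting $a$, $b$, and $m$ pairwise in minimal position, I fix the endpoints $\alpha$ of $a$ and $\beta$ of $b$ defining $\mathcal{P} = \mathcal{P}(a^{\alpha}, b^{\beta})$, and pick some endpoint $\mu$ of $m$. Given a unicorn arc $c \in \mathcal{P}$, Lemma~\ref{key1} applied to the triple $a$, $b$, $m$ produces
\[
c^{*} \in \mathcal{P}(a^{\alpha}, m^{\mu}) \cup \mathcal{P}(m^{\mu}, b^{\beta})
\]
such that $c^{*}$ is adjacent to $c$ in $\mathcal{A}(N)$. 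Without loss of generality $c^{*} \in \mathcal{P}(a^{\alpha}, m^{\mu}) \in P(a, m)$, and since the sub-geodesic $\mathcal{G}_{1}$ of $\mathcal{G}$ from $a$ to $m$ has length $\lceil k/2 \rceil < k$, the induction hypothesis gives $d_{\mathcal{A}}(c^{*}, \mathcal{G}_{1}) \leq 6$. The triangle inequality then yields $d_{\mathcal{A}}(c, \mathcal{G}) \leq 1 + 6 = 7$.

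The main obstacle is that the naive midpoint recursion just described only closes at $7$, one more than the stated bound. To recover the sharp constant $6$, I would invoke Lemma~\ref{key3}: generically the sub-unicorn path $\mathcal{P}(a^{\alpha}, m^{\mu})$ is in fact a subpath of the original $\mathcal{P}(a^{\alpha}, b^{\beta})$, in which case the constructed $c^{*}$ already lies on $\mathcal{P}$ and one of the unit distances in the inductive step can be absorbed. The exceptional case of Lemma~\ref{key3}, where $c_{i}$ and $c_{i+2}$ are already adjacent, must be treated separately but actually produces an even stronger bound and does not damage the estimate. Carefully distinguishing these cases in the induction, and if necessary strengthening the base case to cover short geodesics up to a length where the refined recursion stabilizes, closes the induction at $6$ and gives the proposition.
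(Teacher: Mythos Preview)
Your induction does not close, and the proposed fix via Lemma~\ref{key3} rests on a misreading of that lemma. If the inductive hypothesis is ``distance $\leq 6$ for all shorter geodesics,'' the midpoint step yields $7$; if you strengthen the hypothesis to $7$, the same step yields $8$; and so on. No constant is a fixed point of this recursion, so the induction on geodesic length cannot prove a uniform bound at all---at best it shows the distance is $O(\log d_{\mathcal A}(a,b))$. Your attempt to save one unit by invoking Lemma~\ref{key3} fails because that lemma concerns sub-unicorn-paths $\mathcal P(c_i^\alpha, c_j^\beta)$ between two unicorn arcs $c_i, c_j$ already lying on $\mathcal P(a^\alpha, b^\beta)$; the geodesic midpoint $m$ is an arbitrary arc on $\mathcal G$ with no reason to be a unicorn arc for $a^\alpha, b^\beta$, so there is no statement saying $\mathcal P(a^\alpha, m^\mu)$ is a subpath of $\mathcal P(a^\alpha, b^\beta)$. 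Even granting the conclusion you want, it is unclear how ``$c^*$ lies on $\mathcal P$'' absorbs a unit in the distance from $c$ to $\mathcal G$.

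The paper's argument is structurally different and avoids this trap. It does not induct on the length of $\mathcal G$. Instead it lets $c$ be a unicorn arc at \emph{maximal} distance $k$ from $\mathcal G$, isolates a subpath $[a',b']\subset \mathcal P$ of radius $2k$ around $c$ (here Lemma~\ref{key3} is used correctly, with $a',b'$ genuine unicorn arcs), projects its endpoints to $\mathcal G$, and bounds the total length of the resulting detour by $8k$. Then the logarithmic estimate of Lemma~\ref{a} gives a vertex $x_i$ on that detour with $d_{\mathcal A}(c,x_i)\leq \lceil \log_2 8k\rceil$, while maximality of $k$ forces $d_{\mathcal A}(c,x_i)\geq k$. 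The resulting inequality $k\leq \lceil \log_2 8k\rceil$ has largest integer solution $k=6$. The key missing idea in your proposal is this self-bounding inequality coming from comparing $k$ to a logarithm of $k$; without it, no induction on geodesic length will produce a uniform constant.
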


We use ${\mathbb N}$ for the set of all natural numbers (not including $0$). 

\begin{lem}\label{a}
Let $x_{0}, x_{1},\ldots, x_{m}$ $(m\leq 2^{k}, k\in {\mathbb N})$ be a sequence of vertices of $\mathcal{A}(N)$.
Then for any $\mathcal{P}\in P(a, b)$ and any $c\in \mathcal{P}$, there exist $0\leq i<m$ and $c^{*}\in \mathcal{P}^{*}\in P(x_{i}, x_{i+1})$ such that $d_{\mathcal{A}}(c, c^{*})\leq k$.
\end{lem}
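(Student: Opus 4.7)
The plan is to prove this by induction on $k$, using Lemma~\ref{key1} as the ``1-slim triangle'' building block combined with binary subdivision of the given sequence. Implicitly, $a$ and $b$ in the statement are $x_0$ and $x_m$, so $\mathcal{P}\in P(x_0,x_m)$.

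For the base case $k=1$ we have $m\leq 2$. When $m=1$, we have $\mathcal{P}\in P(x_0,x_1)$ already, so we take $i=0$, $\mathcal{P}^*=\mathcal{P}$ and $c^*=c$, giving distance $0$. When $m=2$, after choosing representatives of $x_0$, $x_1$, $x_2$ in pairwise minimal position as authorized by the Remark, Lemma~\ref{key1} produces, for each $c\in\mathcal{P}(x_0^{\alpha},x_2^{\beta})$, an arc $c^*$ adjacent to $c$ that lies in $\mathcal{P}(x_0^{\alpha},x_1^{\delta})\cup\mathcal{P}(x_1^{\delta},x_2^{\beta})\subset P(x_0,x_1)\cup P(x_1,x_2)$, so $d_{\mathcal{A}}(c,c^*)\leq 1=k$. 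For the inductive step from $k$ to $k+1$, suppose $m\leq 2^{k+1}$, and set $y=x_{\lceil m/2\rceil}$, which splits the sequence into two subsequences $x_0,\ldots,y$ and $y,\ldots,x_m$ each having at most $2^k$ edges. After arranging $x_0$, $x_m$, $y$ to be pairwise in minimal position (keeping the representatives underlying $\mathcal{P}$), apply Lemma~\ref{key1} with $d=y$ to obtain, for each $c\in\mathcal{P}$, an adjacent arc $c'$ lying in some unicorn path $\mathcal{P}'\in P(x_0,y)\cup P(y,x_m)$. The induction hypothesis, applied to whichever half-sequence contains $\mathcal{P}'$, yields an index $i$ and $c^*\in\mathcal{P}^*\in P(x_i,x_{i+1})$ with $d_{\mathcal{A}}(c',c^*)\leq k$, and the triangle inequality gives
\[
d_{\mathcal{A}}(c,c^*)\leq d_{\mathcal{A}}(c,c')+d_{\mathcal{A}}(c',c^*)\leq 1+k=k+1,
\]
closing the induction.

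The main technical subtlety I anticipate is coordinating minimal-position representatives: at each recursive step Lemma~\ref{key1} requires the triple $(x_0,x_m,y)$ to be pairwise in minimal position, while the chosen unicorn path $\mathcal{P}$ was built from a specific representative pair for $(x_0,x_m)$. One has to note that homotoping $y$ into minimal position with both does not affect its class in $\mathcal{A}(N)$ and therefore preserves the meaning of the statement. A minor degenerate case is when $x_0\cap y=\emptyset$ (or $y\cap x_m=\emptyset$), so that $P(x_0,y)=\{(x_0,y)\}$ is just the length-one edge; but the proof of Lemma~\ref{key1} explicitly sets $c^*=d=y$ in the disjoint case, and $y$ is a vertex of that edge, so the argument still applies without modification.
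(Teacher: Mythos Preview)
Your proof is correct and follows essentially the same approach as the paper: induction on $k$, using Lemma~\ref{key1} to pass from the unicorn path $\mathcal{P}(x_0,x_m)$ to a unicorn path over one of two halves, then invoking the inductive hypothesis and the triangle inequality. The only cosmetic differences are that the paper splits at $x_{2^k}$ (treating $m\leq 2^k$ as already covered) while you split at $x_{\lceil m/2\rceil}$, and that you make the minimal-position bookkeeping explicit whereas the paper leaves it implicit.
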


\begin{proof}[Proof of Lemma~\ref{a}]
We prove this by induction of $k$.
Suppose that $k=1$. 
If $m=0$, then $P(x_{0}, x_{0})=\{(x_{0}, x_{0})\}$.
Indeed, $x_{0}$ is an arc and its regular neighborhood is a band, and then there exists an arc which is homotopic to $x_{0}$ and disjoint from $x_{0}$. 
If $m=1$, then we set $x_{0}=a$ and $x_{1}=b$.
By Proposition~\ref{rem1}, for any $\mathcal{P}\in P(a, b)$ and $c_{i}\in \mathcal{P}$, the unicorn arc $c_{i+1}\in \mathcal{P}$ satisfies $d_{\mathcal{A}}(c_{i}, c_{i+1})=1\leq 2$. 
If $m=2$, the we set $x_{0}=a$, $x_{1}=d$, and $x_{2}=b$.
We choose one of the endpoints $\alpha _{+}$, $\beta _{+}$, and $\delta _{+}$ of $a$, $b$, and $d$.
By Lemma~\ref{key1}, for any $c\in \mathcal{P}(a^{\alpha _{+}}, b^{\beta _{+}})$, there exists $c'\in \mathcal{P}(a^{\alpha _{+}}, d^{\delta _{+}})\cup \mathcal{P}(b^{\beta _{+}}, d^{\delta _{+}})$ such that $d_{\mathcal{A}}(c, c')=1\leq 2$.
Hence the case of $k=1$ is done.

Suppose that for all $m\leq 2^{k}$, the statement of Lemma~\ref{a} is satisfied.
For any $2^{k}<m\leq 2^{k+1}$ and any sequence $x_{0}, x_{1}, \ldots, x_{m}$ of vertices of $\mathcal{A}(N)$, set $x_{0}=a$, $x_{2^{k}}=d$, and $x_{m}=b$. By Lemma~\ref{key1}, for any $\mathcal{P}_{1}\in P(a,b)$ and any $c\in \mathcal{P}_{1}$, there exists $c'\in \mathcal{P}_{2}\cup \mathcal{P}_{3}\in P(a, d)\cup P(d, b)$, where $\mathcal{P}_{2}\in P(a, d)$ and $\mathcal{P}_{3}\in P(d, b)$, such that $d_{\mathcal{A}}(c, c')=1$. 
If $c'\in \mathcal{P}_{2}$, then by the assumption of the induction, there exist $0\leq i<2^{k}$ and $c^{*}\in \mathcal{P}^{*}\in P(x_{i}, x_{i+1})$ such that $d_{\mathcal{A}}(c', c^{*})\leq k$.
Thus, we get $d_{\mathcal{A}}(c, c^{*})\leq d_{\mathcal{A}}(c, c')+d_{\mathcal{A}}(c', c^{*}) \leq k+1$.
If $c'\in \mathcal{P}_{3}$, then there also exist $2^{k}\leq i<m$ and $c^{*}\in \mathcal{P}^{*}\in P(x_{i}, x_{i+1})$ such that $d_{\mathcal{A}}(c', c^{*})\leq k$ because the sequence of vertices $x_{2^{k}}, \ldots, x_{m}$ consists of less than or equal to $2^{k}+1$ vertices of $\mathcal{A}(N)$ and because of the hypothesis of the induction. 
Hence, we get $d_{\mathcal{A}}(c, c^{*})\leq d_{\mathcal{A}}(c, c')+d_{\mathcal{A}}(c', c^{*})\leq k+1$, as desired.
\end{proof}

\begin{proof}[Proof of Proposition~\ref{distances_between_unipaths_and_geodesic}]
Fix an arbitrary unicorn path $\mathcal{P}\in P(a, b)$.
Let $c\in \mathcal{P}$ be at maximal distance $k$ from $\mathcal{G}$. Assume that $k\geq 1$. The goal of this proof is to show that $k\leq 6$.
We take the maximal subpath $[a', b']\subset \mathcal{P}$ which fills three conditions $c\in [a', b']$, $d_{\mathcal{A}}(c, a')\leq2k$, and $d_{\mathcal{A}}(c, b')\leq2k$.
Let $\alpha$ and $\beta$ be one of the endpoints of $a$ and $b$.
By Lemma~\ref{key3}, either $\mathcal{P}(a'^{\alpha}, b'^{\beta})$ becomes subpath $[a', b']$ of $\mathcal{P}\in P(a, b)$, or $a'$ and $b'$ represent adjacent vertices of $\mathcal{A}(N)$ and $[a', b']=(a, c, b)$.
First, we consider the latter case.
By the conditions of $[a', b']$, we get $a'=a$ and $b'=b$. We see $\mathcal{G}=(a, b)$, since $\mathcal{G}$ is a geodesic in $\mathcal{A}(N)$ connecting $a$ and $b$, and $a=a'$ and $b=b'$ represent adjacent vertices of $\mathcal{A}(N)$.
Hence, we get $d_{\mathcal{A}}(c, \mathcal{G})=1\leq 6$.
Second, we consider the former case.
Let $a''$ and $b''$ be the closest vertices in $\mathcal{G}$ to $a'$ and $b'$ in $\mathcal{A}(N)$. 
It follows that $d_{\mathcal{A}}(a', a'')\leq k$ and $d_{\mathcal{A}}(b' , b'')\leq k$.

Then,
\begin{align*}
d_{\mathcal{A}}(a'', b'') &\leq d_{\mathcal{A}}(a'', a')+d_{\mathcal{A}}(a', c)+d_{\mathcal{A}}(c, b')+d_{\mathcal{A}}(b', b'')\\
&\leq k+2k+2k+k\\
&= 6k.
\end{align*}

Let $a'a''$, $b'b''$, and $a''b''$ be geodesics in $\mathcal{A}(N)$ connecting $a'$ and $a''$, $b'$ and $b''$, and $a''$ and $b''$. 
Note that $a''b''$ is a subpath of $\mathcal{G}$. 
It follows that 
\[d_{\mathcal{A}}(a', b')\leq d_{\mathcal{A}}(a', a'')+d_{\mathcal{A}}(a'', b'')+d_{\mathcal{A}}(b'', b')\leq k+6k+k=8k.\] 
Suppose that the length of $a'a''\cup b'b''\cup a''b''$ is $m$.
We get $m\leq 8k$. 
Let $\{x_{i}\}_{i=0}^{m}$ be the sequence of the vertices of $a'a''\cup b'b''\cup a''b''$, where $x_{i}$ is adjacent to $x_{i+1}$ for each $i=0,\ldots, m-1$, and $x_{0}=a'$, $x_{m}=b''$.
By Lemma~\ref{a}, for $c\in \mathcal{P}$, there exists $0\leq i<m$ such that $d_{\mathcal{A}}(c, x_{i})\leq\lceil \log_2 8k \rceil$. 
For this $x_{i}$, we claim that $d_{\mathcal{A}}(c, x_{i})\geq k$. 
Indeed, if $x_{i}\in \mathcal{G}$, then $d_{\mathcal{A}}(c, x_{i})\geq d_{\mathcal{A}}(c, \mathcal{G})=k$.
If $x_{i}\not\in \mathcal{G}$ and $x_{i}\in a'a''$, then $a'\not=a''$, and so $d_{\mathcal{A}}(c, a')=2k$. Thus, 
\begin{align*}
d_{\mathcal{A}}(c, x_{i})&\geq  d_{\mathcal{A}}(c, a')- d_{\mathcal{A}}(x_{i}, a')\\
&\geq 2k-k=k.
\end{align*}
If $x_{i}\not\in \mathcal{G}$ and $x_{i}\in b'b''$, then we also get $d_{\mathcal{A}}(c, x_{i})\geq k$.

Therefore, we get $k\leq \lceil \log_2 8k \rceil$, and so $k\leq 6$.
\end{proof}

\begin{proof}[Proof of Theorem~\ref{second_thm}]
Let $T=abd$ be any geodesic triangle in $\mathcal{A}(N)$, where $a$, $b$, and $d$ are three vertices of $\mathcal{A}(N)$.
By Lemma~\ref{key2}, for $a$, $b$, and $d$, there exist $c_{ab}\in\mathcal{P}(a^{\alpha} ,b^{\beta } )$, $c_{bd}\in\mathcal{P}(b^{\beta }, d^{\delta })$, and $c_{da}\in\mathcal{P}(d^{\delta }, a^{\alpha}  )$ such that each pair represents adjacent vertices of $\mathcal{A}(N)$.
Let $ab$, $bd$, and $da$ be three sides of $T$ connecting $a$ and $b$, $b$ and $d$, and $d$ and $a$ in $\mathcal{A}(N)$.
By Proposition~\ref{distances_between_unipaths_and_geodesic}, $c_{ab}$ is at distance $\leq 6$ from $ab$, and $\leq 7$ from both $bd$ and $da$.
Hence, $c_{ab}$ is a $7$-center of $T$ (see Figure~\ref{fig_second_thm}).
\end{proof}

\begin{figure}[h]
\includegraphics[scale=1.0]{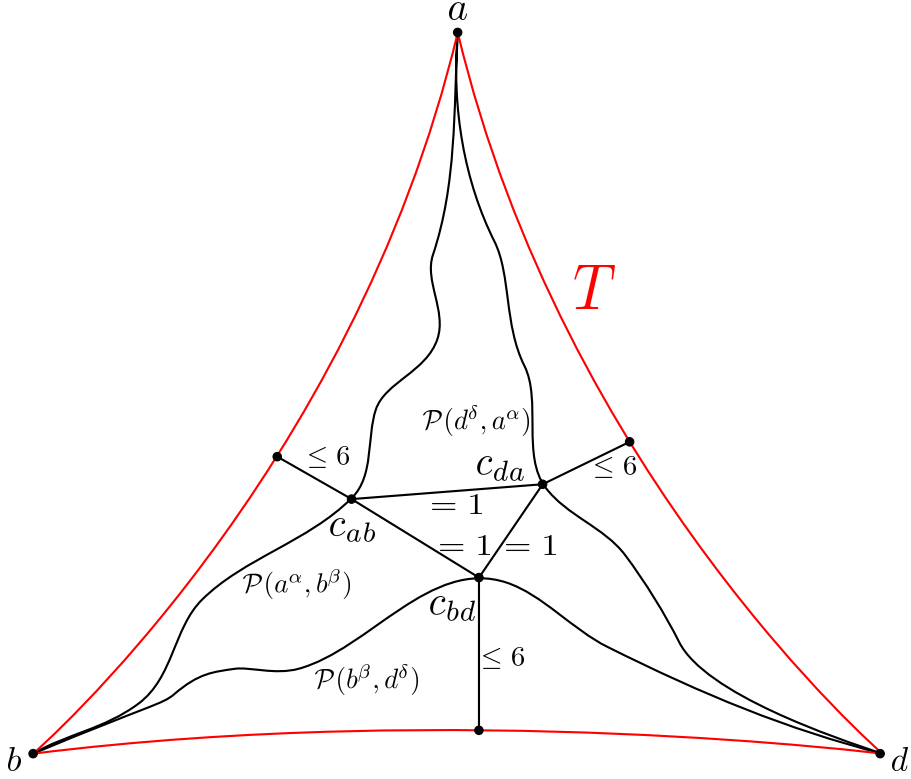}
\caption{Arc graphs are $7$-hyperbolic ($abd$ has a $7$-center $c_{ab}$ in $\mathcal{A}(N)$).}\label{fig_second_thm}
\end{figure}

\section{Curve graphs are uniformly hyperbolic}\label{Curve graphs are hyperbolic}

\begin{prop}{\rm(}\cite[Theorem 6.1]{Szepietowski}{\rm)}\label{Szepietowski}
The complex of curve of $N_{g, n}$ is $(g-3)$-connected if $n=0,1$, and $(g+n-5)$-connected if $n\geq 2$.
\end{prop}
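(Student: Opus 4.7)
Since the statement is quoted directly from Szepietowski's paper, no new proof is required here. Still, to describe how I would approach it, the natural strategy is to adapt Harer's classical argument for the orientable case to the non-orientable setting, proceeding by induction on the complexity $(g,n)$ of the surface $N = N_{g,n}$.

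The plan is to order pairs $(g,n)$ lexicographically and take the base cases to be surfaces for which the claimed bound is negative (hence vacuous) or small enough that $\mathcal{C}(N_{g,n})$ can be examined directly (for instance when $\mathcal{C}(N_{g,n})$ is empty or trivially connected). For the inductive step, starting from a simplicial map $\varphi\colon S^k \to \mathcal{C}(N_{g,n})$ with $k$ at most the claimed bound, one wants to extend $\varphi$ to $D^{k+1}$. After simplicial approximation on a sufficiently fine triangulation of $S^k$, the standard technique is to produce, for each simplex in the image (or at least locally on each piece of a cellular decomposition of $S^k$), a vertex of $\mathcal{C}(N_{g,n})$ disjoint from every curve appearing there, and then cone that region off with the disjoint curve as apex. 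Producing such disjoint curves is carried out by cutting $N_{g,n}$ along an auxiliary simple closed curve, which yields a surface of strictly smaller complexity, and applying the inductive hypothesis to the pieces.

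The main obstacle specific to the non-orientable case is the presence of one-sided simple closed curves. Cutting $N_{g,n}$ along a one-sided curve produces $N_{g-1,n+1}$, reducing the genus by only $1$, in contrast to cutting along a two-sided non-separating curve in an orientable surface, which removes an entire handle. This asymmetric surgery behavior is what forces the connectivity bound to take the form $g-3$ (or $g+n-5$ for $n\geq 2$), rather than the cleaner bounds familiar from Harer's theorem. The technical crux is to verify that each possible surgery type (one-sided non-separating, two-sided non-separating with either orientable or non-orientable complement, and separating) drops the connectivity exponent by exactly one, and that the local fills obtained on the pieces can be glued consistently across the decomposition of $S^k$. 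Organizing this bookkeeping—in particular, ensuring that the inductive hypothesis applies after a one-sided cut despite the smaller genus drop—is the main technical hurdle distinguishing the non-orientable proof from Harer's original argument.
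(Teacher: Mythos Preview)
The paper does not prove this proposition at all; it is quoted from \cite{Szepietowski} and used as a black box to deduce the subsequent corollary on path-connectedness of $\mathcal{C}(N_{g,n})$. You correctly recognize this in your first sentence, so your submission is already adequate for the paper's purposes.

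Your additional sketch of an inductive Harer-style argument is a reasonable outline of how such a result is typically established, and your remark about one-sided curves causing only a genus drop of $1$ under cutting is the right place to locate the non-orientable subtlety. But since the paper contains no proof of its own here, there is nothing to compare your outline against; the honest answer is simply that this is a cited result and the paper defers entirely to Szepietowski for the argument.
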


By Proposition~\ref{Szepietowski}, we get the following:

\begin{cor}
If $g=1, 2$ and $g+n\geq 5$, or $g\geq 3$, then the curve graph $\mathcal{C}(N_{g, n})$ is path-connected.
\end{cor}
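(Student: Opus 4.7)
The plan is to deduce this directly from Proposition~\ref{Szepietowski} by noting that path-connectedness of the curve graph $\mathcal{C}(N_{g,n})$ is equivalent to $0$-connectedness of the complex of curves (since the graph is the $1$-skeleton of that complex, and $0$-connectedness is a property detected at the level of the $1$-skeleton). So the task is just to unpack the hypothesis ``the complex is $(g-3)$-connected, resp.\ $(g+n-5)$-connected'' and verify that the relevant connectivity index is $\geq 0$ in each case listed.

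First, I would split the proof into the two cases from Proposition~\ref{Szepietowski}. In the case $n = 0, 1$, the proposition gives $(g-3)$-connectedness, and $(g-3)$-connectedness implies path-connectedness exactly when $g - 3 \geq 0$, i.e.\ $g \geq 3$; this matches the ``$g \geq 3$'' branch of the hypothesis (with $n \leq 1$). In the case $n \geq 2$, the proposition gives $(g+n-5)$-connectedness, which yields path-connectedness when $g + n \geq 5$.

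Next, I would check that the hypothesis of the corollary covers precisely the cases needed. For $g = 1$, $g + n \geq 5$ forces $n \geq 4 \geq 2$, so we are in the $n \geq 2$ branch and can apply the second part of Proposition~\ref{Szepietowski}. For $g = 2$, $g + n \geq 5$ forces $n \geq 3 \geq 2$, again the $n \geq 2$ branch. For $g \geq 3$: if $n \leq 1$ we use the first branch directly, and if $n \geq 2$ then automatically $g + n \geq 5$ so the second branch applies. In every subcase the relevant connectivity index is nonnegative, so the complex of curves is $0$-connected, hence $\mathcal{C}(N_{g,n})$ is path-connected.

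There is essentially no obstacle: the only thing to be careful about is the bookkeeping of which connectivity bound is in force for which $(g, n)$, and the harmless observation that $0$-connectedness of the full complex is the same as path-connectedness of its $1$-skeleton $\mathcal{C}(N_{g,n})$. So the write-up will be short, a case split on $n = 0, 1$ versus $n \geq 2$ followed by a one-line verification that the stated hypotheses imply the appropriate inequality.
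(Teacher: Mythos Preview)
Your proposal is correct and takes exactly the approach the paper intends: the paper simply states the corollary as an immediate consequence of Proposition~\ref{Szepietowski} without writing out a proof, and your case split on $n\in\{0,1\}$ versus $n\geq 2$ is precisely the unpacking that makes this deduction explicit. There is nothing to add.
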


We define a retraction $r:\mathcal{AC}(N)\rightarrow \mathcal{C}(N)$ as follows.
If $a\in \mathcal{C}^{(0)}(N)$, then $r(a)=a$.
If $a\in \mathcal{A}^{(0)}(N)$, then we assign a boundary component of a regular neighborhood of its union with $\partial N$ to $r(a)$ (see Figure~\ref{fig_retraction}).
Note that if there are two boundary components of the regular neighborhood, then we choose essential one, that is, a curve which does not bound a disk or a M\"{o}bius band and is not homotopic to a boundary component of $N$ (c.f. $r'\colon \mathcal{AC}(S)\rightarrow \mathcal{C}(S)$ in~\cite{HPW}).

The difference from $r'$ in~\cite{HPW} is as follows: if $a$ is an arc on $N$ which goes through crosscaps odd number of times, then $r(a)$ is ``twisted" (see the left-hand side of Figure~\ref{fig_r(a)_is_twisted_and_not_twisted}).

\begin{figure}[h]
\includegraphics[scale=1.3]{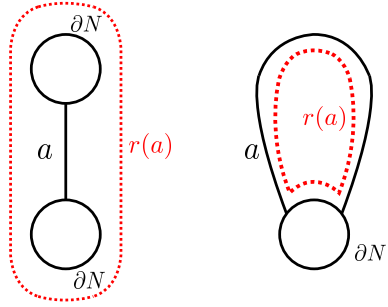}
\caption{Examples of the retraction $r$.}\label{fig_retraction}
\end{figure}

\begin{figure}[h]
\includegraphics[scale=0.275]{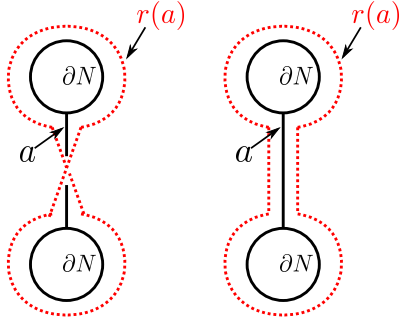}
\caption{Examples that $r(a)$ is twisted (left) and untwisted (right).}\label{fig_r(a)_is_twisted_and_not_twisted}
\end{figure}

\begin{figure}[h]
\includegraphics[scale=0.90]{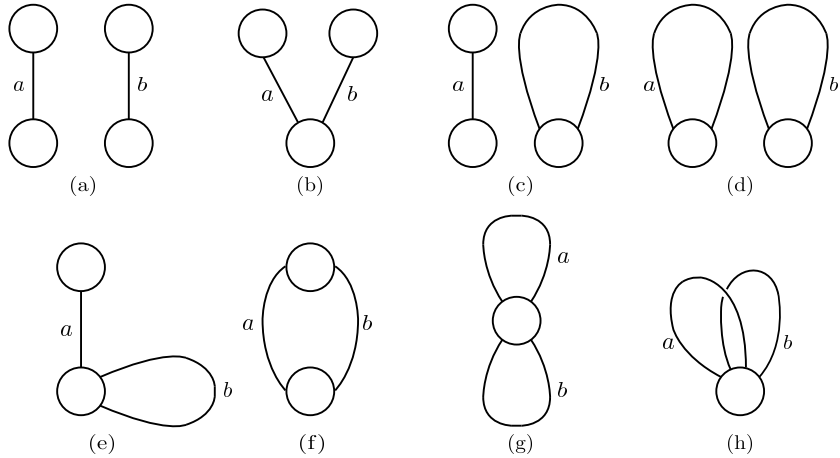}
\caption{Eight cases of $a, b\in \mathcal{A}^{(0)}(N)$ which satisfy $d_{\mathcal{AC}}(a, b)=1$.}\label{fig_2-Lipschitz}
\end{figure}

\newpage
\begin{lem}\label{r_2-Lipschitz}
The retraction $r$ is $2$-Lipschitz, namely, $d_{\mathcal{C}}(r(a), r(b))\leq 2d_{\mathcal{AC}}(a, b)$ for any $a,b\in \mathcal{AC}(N)$.
\end{lem}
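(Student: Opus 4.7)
The plan is the standard reduction: by the triangle inequality it suffices to prove that whenever $d_{\mathcal{AC}}(a,b)=1$ we have $d_{\mathcal{C}}(r(a),r(b))\leq 2$, and then chain this bound along a geodesic in $\mathcal{AC}(N)$. So I would fix an edge $(a,b)$ of $\mathcal{AC}(N)$ and realize $a,b$ by disjoint representatives.

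I would then split into three cases according to the types of $a$ and $b$. If $a,b\in\mathcal{C}^{(0)}(N)$ there is nothing to do since $r(a)=a$ and $r(b)=b$ are already disjoint, so $d_{\mathcal{C}}(r(a),r(b))\leq 1$. If $a\in\mathcal{C}^{(0)}(N)$ and $b\in\mathcal{A}^{(0)}(N)$, I would push the regular neighborhood $\nu(b\cup\partial N)$ off of $a$ (possible because $a$ is disjoint from $b$ and from $\partial N$); then $r(b)\subset\partial\nu(b\cup\partial N)$ is disjoint from $r(a)=a$, and again $d_{\mathcal{C}}(r(a),r(b))\leq 1$.

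The substantive case is $a,b\in\mathcal{A}^{(0)}(N)$, and here I would carry out the case analysis indicated by Figure~\ref{fig_2-Lipschitz}: up to homeomorphism of $N$ fixing $\partial N$ setwise and up to relabeling, there are eight configurations for how the four endpoints of $a$ and $b$ distribute on boundary components of $N$ (one boundary component versus two, with the various possible endpoint interleavings). In each configuration I would describe a small neighborhood $U=\nu(a\cup b\cup \partial N)$ and exhibit an essential simple closed curve $c\subset\partial U$ that is simultaneously disjoint from some component of $\partial\nu(a\cup\partial N)$ realizing $r(a)$ and from some component of $\partial\nu(b\cup\partial N)$ realizing $r(b)$. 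Such a $c$ gives the edge path $r(a)-c-r(b)$ in $\mathcal{C}(N)$, proving $d_{\mathcal{C}}(r(a),r(b))\leq 2$. The verification that $c$ is essential, namely that it does not bound a disk or a M\"obius band and is not peripheral, is a direct inspection in each of the eight pictures; when either of the naive candidates for $c$ is inessential one falls back to taking a different boundary component of $U$, and the assumption that $N$ itself admits essential curves (so that $\mathcal{C}(N)$ is nonempty and connected, cf.\ the corollary to Proposition~\ref{Szepietowski}) guarantees that some such $c$ exists.

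The main obstacle, and the point where the proof genuinely deviates from the orientable case in~\cite{HPW}, is bookkeeping the possible twist described in Figure~\ref{fig_r(a)_is_twisted_and_not_twisted}: when $a$ crosses crosscaps an odd number of times the boundary component of $\nu(a\cup\partial N)$ chosen as $r(a)$ wraps twice around a neighborhood of $a$ rather than once. I would handle this by noting that the twist is entirely localized to $\nu(a\cup\partial N)$, so when I build $U=\nu(a\cup b\cup\partial N)$ the candidate curve $c\subset\partial U$ still lies outside of this twisted neighborhood and is still disjoint from $r(a)$; the only effect of the twist is that one must sometimes pick the \emph{other} boundary component of $\nu(b\cup\partial N)$ (or of $U$) in order to remain essential, which is possible precisely because a M\"obius band has an essential simple closed curve in a regular neighborhood of its core. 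Once this twist-sensitive case check is complete in all eight configurations, the Lipschitz bound follows.
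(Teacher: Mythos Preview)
Your outline follows the same strategy as the paper: reduce to edges, split by vertex type, and in the arc--arc case run through the eight endpoint configurations of Figure~\ref{fig_2-Lipschitz}, looking for an essential curve disjoint from both $r(a)$ and $r(b)$. The disjointness part is fine, since any boundary component of $U=\nu(a\cup b\cup\partial N)$ can be taken disjoint from $r(a)$ and $r(b)$, twisted or not.

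The gap is in your essentiality claim. You assert that ``the assumption that $N$ itself admits essential curves \ldots\ guarantees that some such $c$ exists'' among the components of $\partial U$, but this does not follow, and the paper's own case analysis furnishes the counterexample. In configuration~(g) with $(g,n)=(3,1)$ and both $r(a)$, $r(b)$ untwisted, \emph{all three} boundary components $\alpha_1,\alpha_2,\alpha_3$ of the regular neighborhood can bound M\"obius bands and hence be inessential. The paper resolves this by abandoning $\partial U$ and taking instead the core of one of those complementary M\"obius bands, a one-sided curve which does not lie on $\partial U$ at all. More generally, the paper never appeals to an abstract existence principle: it carries out explicit Euler-characteristic computations in each sub-case (splitting further according to whether $r(a)$ and $r(b)$ are twisted or untwisted) to determine which component of $\partial U$, if any, is essential, and it isolates $(g,n)=(3,1)$ as an exceptional case requiring a separate argument. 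Your sketch needs those computations, and in particular the handling of the exceptional surface, to be complete; the sentence about ``a M\"obius band has an essential simple closed curve in a regular neighborhood of its core'' hints in the right direction but is not placed where the actual failure occurs.
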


\begin{proof}
It is enough to prove that $d_{\mathcal{C}}(r(a), r(b))\leq 2$ for $a, b\in \mathcal{AC}(N)$ with $d_{\mathcal{AC}}(a, b)=1$. 

Case 1: if $a, b\in \mathcal{C}^{(0)}(N)$, then $d_{\mathcal{C}}(r(a), r(b))=d_{\mathcal{C}}(a, b)=d_{\mathcal{AC}}(a, b)=1<2$.

Case 2: if $a\in \mathcal{C}^{(0)}(N)$ and $b\in \mathcal{A}^{(0)}(N)$, then we can take the regular neighborhood of the union of $b$ and the boundary components which have endpoints of $b$ without intersecting $a$.
Note that $r(b)$ may coincide with $a$. Thus $d_{\mathcal{C}}(r(a), r(b))=d_{\mathcal{C}}(a, r(b))\leq 1< 2$.

Case 3: if $a, b\in \mathcal{A}^{(0)}(N)$, then there are eight types of pairs of $a$, $b$ which fill $d_{\mathcal{AC}}(a, b)=1$ (see Figure~\ref{fig_2-Lipschitz}, where each circle represents a boundary component of $N$).
Note that there are two cases where $a$ (resp. $b$) passes through crosscaps odd number of times, and where it  passes through crosscaps even number of times.
In the former case, we say that $r(a)$ (resp. $r(b)$) is {\it twisted} (see the left-hand side of Figure~\ref{fig_r(a)_is_twisted_and_not_twisted}), and in the latter case, we say that $r(a)$ (resp. $r(b)$) is {\it untwisted} (see the right-hand side of Figure~\ref{fig_r(a)_is_twisted_and_not_twisted}).

(1) The case $(g, n)\not=(3, 1)$

In the case of (a) in Figure~\ref{fig_2-Lipschitz}, $r(a)$ and $r(b)$ become essential disjoint curves. Since the genus of $N$ is at least $1$, we get $r(a)\not=r(b)$. Hence, $d_{\mathcal{C}}(r(a), r(b))=1<2$.

In the case of (b) in Figure~\ref{fig_2-Lipschitz},  there are three cases where both $r(a)$ and $r(b)$ are untwisted, $r(a)$ is untwisted and $r(b)$ is twisted, and both $r(a)$ and $r(b)$ are twisted.
In all three cases, we take a boundary component $\alpha$ of a regular neighborhood of the union of $a$ and $b$ with $\partial N$ large enough to intersect neither $r(a)$ nor $r(b)$. 
Then it is sufficient to prove that $\alpha$ is essential. 
It is clear that $\alpha$ bounds $3$ punctured disk on one side. 
We show that $\alpha$ does not bound a disk, an annulus, or a M\"{o}bius band on the other side. 
By a calculation of the Euler characteristics, we see that $\alpha$ separates $N$ into $N_{0, 4}$ and $N_{g, n-2}$. 
If $g\geq 2$, then $N_{g, n-2}$ is not a disk, an annulus, or a M\"{o}bius band. If $g=1$, then $N_{g, n-2}$ is also not a disk, an annulus, or a M\"{o}bius band, since $g+n\geq 5$. 
Therefore, $\alpha$ is essential and $d_{\mathcal{C}}(r(a), r(b))\leq d_{\mathcal{C}}(r(a), \alpha)+d_{\mathcal{C}}(\alpha, r(b))\leq 2$.

In the case of (c) and (d) in Figure~\ref{fig_2-Lipschitz}, $r(a)$ and $r(b)$ are essential and disjoint curves. 
Note that $r(a)$ and $r(b)$ may coincide. 
Hence, $d_{\mathcal{C}}(r(a), r(b))\leq 1<2$.

In the case of (e) in Figure~\ref{fig_2-Lipschitz}, there are four cases where both $r(a)$ and $r(b)$ are untwisted, $r(a)$ is untwisted and $r(b)$ is twisted, $r(a)$ is twisted and $r(b)$ is untwisted, and both $r(a)$ and $r(b)$ are twisted. 
Let $\gamma _{1}$ and $\gamma _{2}$ be the boundary components of $N$ which have endpoints of $a$ and $b$.
In the first case, i.e. both $r(a)$ and $r(b)$ are untwisted, there are two boundary components of the regular neighborhood of $a\cup \gamma _{1}\cup \gamma _{2}\cup b$. 
We denote by $\alpha$ the outer part of the regular neighborhood, and by $\alpha'$ the other (see Figure~\ref{fig_2-Lipschitz_e}). 
Note that $\alpha$ and $\alpha'$ intersect neither $r(a)$ nor $r(b)$. 
It is sufficient to show that at least one of $\alpha$ and $\alpha'$ is essential. 
If $\alpha$ bounds a disk, then we take $\alpha'$. 
The curve $\alpha'$ separates $N$ into $N_{0, 3}$ and $N_{g, n-1}$. 
If $g\geq 2$, then $N_{g, n-1}$ is not a disk, an annulus, or a M\"{o}bius band. 
If $g=1$, then $N_{g, n-1}$ is also not a disk, an annulus, or a M\"{o}bius band, for $g+n\geq 5$. Hence, $\alpha'$ is essential. 
If $\alpha$ bounds an annulus or a M\"{o}bius band, then we take $\alpha'$. 
The curve $\alpha'$ separates $N$ into $N_{0, 4}$ and $N_{g, n-2}$, or $N_{1, 3}$ and $N_{g-1, n-1}$, respectively. 
By a similar argument to that of (b), $N_{g, n-2}$ is not a disk, an annulus, or a M\"{o}bius band.
We consider $N_{g-1, n-1}$.
If $g-1\geq 2$, then $N_{g-1, n-1}$ is not a disk, an annulus, or a M\"{o}bius band. If $g-1=1$ or $0$, then $N_{g-1, n-1}$ is not a disk, an annulus, or a M\"{o}bius band, for $g+n\geq 5$. 
Hence $\alpha'$ is essential.
If $\alpha$ does not bound a disk, an annulus, or a M\"{o}bius band, then we take $\alpha$, and so $\alpha$ is essential.
In the second case, i.e. $r(a)$ is untwisted and $r(b)$ is twisted, we can show it by a similar argument to that of the first case in (e).
In the third case, i.e. $r(a)$ is twisted and $r(b)$ is untwisted, there is one boundary component of the regular neighborhood of $a\cup \gamma _{1}\cup \gamma _{2}\cup b$, and we denote it by $\alpha$. 
It is sufficient to show that $\alpha$ is essential. 
The curve $\alpha$ separates $N$ into $N_{1, 3}$ and $N_{g-1, n-1}$. 
By a similar argument to that of the first case in (e), $\alpha$ is essential.
In the last case, i.e. both $r(a)$ and $r(b)$ are twisted, we can show it by a similar argument to that of the third case in (e).

\begin{figure}[h]
\includegraphics[scale=0.5]{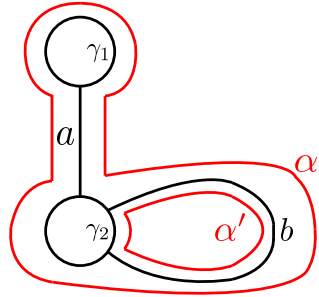}
\caption{The case where both $r(a)$ and $r(b)$ are untwisted in (e).}\label{fig_2-Lipschitz_e}
\end{figure}

In the case of (f) in Figure~\ref{fig_2-Lipschitz}, there are three cases where both $r(a)$ and $r(b)$ are untwisted, $r(a)$ is untwisted and $r(b)$ is twisted, and both $r(a)$ and $r(b)$ are twisted.
Let $\gamma _{1}$ and $\gamma _{2}$ be the boundary components of $N$ which have endpoints of $a$ and $b$.
In the first case, i.e. both $r(a)$ and $r(b)$ are untwisted, there are two boundary components of the regular neighborhood of $a\cup \gamma _{1}\cup \gamma _{2}\cup b$. We denote by $\alpha$ the outer part of the regular neighborhood, and by $\alpha'$ the other (see Figure~\ref{fig_2-Lipschitz_f}). 
If $\alpha$ bounds a disk, an annulus, or a M\"{o}bius band, we take $\alpha'$. 
The curve $\alpha'$ separates $N$ into $N_{0, 3}$ and $N_{g, n-1}$, $N_{0, 4}$ and $N_{g, n-2}$, or $N_{1, 3}$ and $N_{g-1, n-1}$ respectively, and so $\alpha$ is essential.
If $\alpha$ does not bound a disk, an annulus, or a M\"{o}bius band, then we take $\alpha$, which is essential.
In the second case, i.e. $r(a)$ is untwisted and $r(b)$ is twisted, there is one boundary component of the regular neighborhood of $a\cup \gamma _{1}\cup \gamma _{2}\cup b$, and we denote it by $\alpha$. 
The curve $\alpha$ separates $N$ into $N_{1, 3}$ and $N_{g-1, n-1}$, and so $\alpha$ is essential.
In the last case, i.e. both $r(a)$ and $r(b)$ are twisted, there are two boundary components of the regular neighborhood of $a\cup \gamma _{1}\cup \gamma _{2}\cup b$. 
We take one of them and denote it by $\alpha$. 
Then $\alpha$ is a non-separating curve on $N$. 
Therefore, $\alpha$ is essential.

\begin{figure}[h]
\includegraphics[scale=0.4]{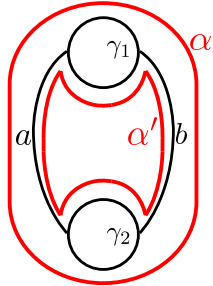}
\caption{The case where both $r(a)$ and $r(b)$ are untwisted in (f).}\label{fig_2-Lipschitz_f}
\end{figure}

In the case of (g) in Figure~\ref{fig_2-Lipschitz}, there are three cases where both $r(a)$ and $r(b)$ are untwisted, $r(a)$ is untwisted and $r(b)$ is twisted, and both $r(a)$ and $r(b)$ are twisted.
Let $\gamma $ be a boundary component of $N$ which has endpoints of $a$ and $b$.
In the first case, i.e. both $r(a)$ and $r(b)$ are untwisted, there are three boundary components of the regular neighborhood of $a\cup \gamma\cup b$.
We denote by $\alpha_{1}$ the component which encloses $a$, $\gamma$, and $b$, and by $\alpha_{2}$ (resp. $\alpha_{3}$) the component which lies the inner part of $a$ (resp. $b$) in Figure~\ref{fig_2-Lipschitz_g}.
Suppose that $\alpha_{1}$ bounds a disk. 
It is sufficient to show that $\alpha_{3}$ is essential if $\alpha_{2}$ is not essential. (If $\alpha_{2}$ is essential, then we take $\alpha_{2}$.) 
When we assume that $\alpha_{2}$ is not essential, $\alpha_{2}$ bounds either an annulus or a M\"{o}bius band. 
Then, the curve $\alpha_{3}$ separates $N$ into either $N_{0, 3}$ and $N_{g, n-1}$, or $N_{1, 2}$ and $N_{g-1, n}$.
We can show that  $N_{g-1, n}$ is also not a disk, an annulus, or a M\"{o}bius band, and so $\alpha_{3}$ is essential.
When $\alpha_{1}$ bounds an annulus and $\alpha_{2}$ is not essential, we can also take an essential curve $\alpha_{3}$ which is disjoint from both $r(a)$ and $r(b)$. 
Suppose that $\alpha_{1}$ bounds a M\"{o}bius band and $\alpha_{2}$ is not essential. 
Then, $\alpha_{2}$ bounds either an annulus or a M\"{o}bius band, and so the curve $\alpha_{3}$ separates $N$ into either $N_{1, 3}$ and $N_{g-1, n-1}$, or $N_{2, 2}$ and $N_{g-2, n}$.  
By a similar argument to that of third case in (e), $N_{g-1, n-1}$ is not a disk, an annulus, or a M\"{o}bius band. 
We consider $N_{g-2, n}$.
If $g-2\geq 2$, then $N_{g-2, n}$ is not a disk, an annulus, or a M\"{o}bius band. 
If $g-2=1$, then $N_{g-2, n}$ is also not a disk, an annulus, or a M\"{o}bius band because of the assumption of $(g, n)\not=(3, 1)$. 
If $g-2=0$, then $N_{g-2, n}$ is also not a disk, an annulus, or a M\"{o}bius band, since $g+n\geq 5$. 
When $\alpha_{1}$ does not bound a disk, an annulus, or a M\"{o}bius band, we take $\alpha_{1}$. 

In the second case, i.e. $r(a)$ is untwisted and $r(b)$ is twisted, there are two boundary components of the regular neighborhood of $a\cup \gamma \cup b$, and  the regular neighborhood of $a\cup \gamma \cup b$ is a non-orientable surface of genus $1$ with $3$ boundary components. 
We denote by $\alpha_{1}$ and $\alpha_{2}$ the boundaries of this surface which are not $\gamma$. 
It is sufficient to show that, if $\alpha_{1}$ is not essential, then $\alpha_{2}$ is essential. 
If $\alpha_{1}$ bounds a disk, an annulus, or a M\"{o}bius band, then $\alpha_{2}$ bounds $N_{1,2}$ and $N_{g-1, n}$, $N_{1, 3}$ and $N_{g-1, n-1}$, or $N_{2, 2}$ and $N_{g-2, n}$. We get $\alpha$ is essential.
In the third case, i.e. both $r(a)$ and $r(b)$ are twisted, there is one boundary component of the regular neighborhood of $a\cup \gamma \cup b$ (we denote it by $\alpha$), and the regular neighborhood of $a\cup \gamma \cup b$ is a non-orientable surface of genus $2$ with $2$ boundary components. 
Then $\alpha$ bounds $N_{2, 2}$ and $N_{g-2, n}$, and so $\alpha$ is essential.

\begin{figure}[h]
\includegraphics[scale=1.3]{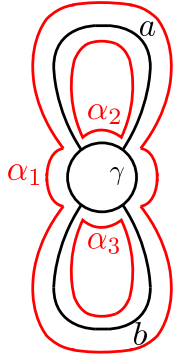}
\caption{The case where both $r(a)$ and $r(b)$ are untwisted in (g).}\label{fig_2-Lipschitz_g}
\end{figure}

In the case of (h) in Figure~\ref{fig_2-Lipschitz}, there are three cases where both $r(a)$ and $r(b)$ are untwisted, $r(a)$ is untwisted and $r(b)$ is twisted, and both $r(a)$ and $r(b)$ are twisted.
Let $\gamma $ be a boundary component of $N$ which has endpoints of $a$ and $b$.
In the first case, i.e. both $r(a)$ and $r(b)$ are untwisted, the regular neighborhood of $a\cup \gamma \cup b$ is twice hold torus, and $r(a)$ and $r(b)$ intersect once. Hence, the complement of $r(a)$ and $r(b)$ is a twice hold disk, and then we can take an essential curve which goes around the twice hold disk.
In the second case, i.e. $r(a)$ is untwisted and $r(b)$ is twisted, it is enough to give the same argument as we gave in the third case of (g).
In the third case, i.e. both $r(a)$ and $r(b)$ are twisted, it is enough to give the same argument as we gave in the second case of (g). 

In the cases of (e), (f), (g), and (h), there is an essential curve $\alpha$ which is intersect neither $r(a)$ nor $r(b)$. 
Therefore, $d_{\mathcal{C}}(r(a), r(b))\leq d_{\mathcal{C}}(r(a), \alpha)+d_{\mathcal{C}}(\alpha, r(b))\leq 2$.
 
(2) The case $(g, n)=(3, 1)$ 

By the argument mentioned above, it is enough to discuss only the case of (g).

If both $r(a)$ and $r(b)$ are untwisted and $\alpha_{1}$ bounds a M\"{o}bius band, then $\alpha_{2}$ bounds a M\"{o}bius band and $\alpha_{3}$ also bounds a M\"{o}bius band, since $(g, n)=(3, 1)$. We take a curve which passes through a M\"{o}bius band, and this curve is essential and intersects neither $r(a)$ nor $r(b)$.
If $r(a)$ is untwisted, $r(b)$ is twisted, and $\alpha_{1}$ bounds a M\"{o}bius band, then $\alpha_{2}$ bounds $N_{2, 1}$ and a M\"{o}bius band, since $(g, n)=(3, 1)$. We take the curve which passes through the M\"{o}bius band in the exterior of the regular neighborhood of $a\cup \gamma \cup b$.
If both $r(a)$ and $r(b)$ are twisted and $\alpha_{1}$ bounds a M\"{o}bius band, then $\alpha_{1}$ also bounds $N_{1, 1}$, since $(g, n)=(3, 1)$. We take the curve which passes through the M\"{o}bius band in the exterior of the regular neighborhood of $a\cup \gamma \cup b$.
In (2), there is an essential curve $\alpha$ which is intersect neither $r(a)$ nor $r(b)$. 
Therefore, $d_{\mathcal{C}}(r(a), r(b))\leq d_{\mathcal{C}}(r(a), \alpha)+d_{\mathcal{C}}(\alpha, r(b))\leq 2$.

(1) and (2) imply that $r$ is a $2$-Lipschitz retraction if $a, b\in \mathcal{A}^{(0)}(N)$, and we complete the proof of Lemma~\ref{r_2-Lipschitz}.
\end{proof}

Before proving Theorem~\ref{mainthm}, we need to show the following proposition.

\begin{prop}
 If $g=1, 2$ and $g+n\geq 5$, or $g\geq 3$, then $\mathcal{AC}(N)$ is connected.
\end{prop}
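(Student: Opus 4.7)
The plan is to bridge the arc vertices and curve vertices of $\mathcal{AC}(N)$ using the retraction $r$ constructed before Lemma~\ref{r_2-Lipschitz}, and then invoke the connectedness of $\mathcal{C}(N)$ provided by the Corollary to Proposition~\ref{Szepietowski}. Note that when $g=1$ or $g=2$ the hypothesis $g+n\geq 5$ forces $n\geq 3$, so the only situation in which $N$ has empty boundary is $g\geq 3$ with $n=0$; in that case there are no essential arcs on $N$, hence $\mathcal{AC}(N)=\mathcal{C}(N)$, which is connected by the Corollary just cited.

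Assume now $n\geq 1$. The key observation is that for every arc vertex $a\in \mathcal{A}^{(0)}(N)$, the curve $r(a)$ is by construction represented by a boundary component of a regular neighborhood of $a$ together with the boundary components of $N$ meeting $\partial a$, and hence can be realized disjointly from $a$. Consequently $a$ and $r(a)$ are adjacent in $\mathcal{AC}(N)$. Given any two vertices $x,y\in \mathcal{AC}^{(0)}(N)$, one connects them by concatenating: the edge from $x$ to $r(x)$ in $\mathcal{AC}(N)$ (vacuous if $x$ is already a curve, since then $r(x)=x$), a path from $r(x)$ to $r(y)$ inside the connected subgraph $\mathcal{C}(N)\subset \mathcal{AC}(N)$, and the edge from $r(y)$ to $y$ (again vacuous if $y$ is a curve). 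This exhibits a path in $\mathcal{AC}(N)$ between $x$ and $y$, proving connectedness.

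The one genuine obstacle is ensuring that $r(a)$ actually lies in $\mathcal{C}^{(0)}(N)$, i.e., that some boundary component of the regular neighborhood is essential (does not bound a disk or a M\"{o}bius band, and is not homotopic into $\partial N$). This reduces to an Euler characteristic computation entirely in the spirit of the eight-case analysis carried out in the proof of Lemma~\ref{r_2-Lipschitz}, but considerably simpler since only a single arc is involved: the complement of a non-essential boundary component would have to be a disk, an annulus, or a M\"{o}bius band, which is incompatible with the standing hypothesis $g\geq 3$ or $g+n\geq 5$. Verifying this topological claim in the handful of possible configurations is the only nontrivial step; once it is done, the paragraph above immediately yields the proposition.
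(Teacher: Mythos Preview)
Your argument is correct, but it follows a different route from the paper's. The paper treats three cases separately: two curve vertices are joined inside $\mathcal{C}(N)$ using the Corollary to Proposition~\ref{Szepietowski}; two arc vertices are joined by a unicorn path in $\mathcal{A}(N)$; and for a curve $a$ and an arc $b$, the paper explicitly manufactures an arc adjacent to $a$ (namely $\eta * a' * \eta^{-1}$, where $a'$ is a push-off of $a$ and $\eta$ is an arc to $\partial N$) and then joins that arc to $b$ by a unicorn path. Thus the paper leans on the connectivity of \emph{both} $\mathcal{C}(N)$ and $\mathcal{A}(N)$.

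Your approach instead routes everything through $\mathcal{C}(N)$ via the retraction $r$, using only that each arc is adjacent to its image $r(a)$. This is arguably cleaner and avoids any appeal to unicorn paths; the price is that you must check $r$ is well-defined on arc vertices, i.e.\ that some boundary component of the regular neighborhood is essential. You correctly flag this as the one substantive step and note that it is an Euler-characteristic check parallel to (and easier than) the case analysis in Lemma~\ref{r_2-Lipschitz}. Since the paper itself tacitly relies on this well-definedness when introducing $r$, your proof is not assuming anything the paper does not, and the sketch you give is readily completed under the standing hypotheses $g\geq 3$ or $g+n\geq 5$.
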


\begin{proof}
If $a, b\in \mathcal{C}^{(0)}(N)$, then there exists an edge-path connecting $a$ and $b$ in $\mathcal{C}(N)$ from the assumption that $g=1, 2$ and $g+n\geq 5$ or $g\geq 3$.
We consider it as an edge-path in $\mathcal{AC}(N)$.
If $a, b\in \mathcal{A}^{(0)}(N)$, then we connect $a$ and $b$ by a unicorn path in $\mathcal{A}(N)$, and consider it as an edge-path in $\mathcal{AC}(N)$.
Therefore, it is enough to consider the case where $a\in \mathcal{C}^{(0)}(N)$ and $b\in \mathcal{A}^{(0)}(N)$.

Fix any $a\in \mathcal{C}^{(0)}(N)$.
We take an appropriate boundary component $a'$ of the regular neighborhood of $a$, and we connect $a'$ and a boundary component of $N$ by an arc $\eta$ which does not intersect $a$.
Then the products $\eta*a'*\eta ^{-1}$ is a properly embedded arc which is disjoint from $a$.
Hence, we can connect the vertices $a$ and $\eta*a'*\eta ^{-1}$ by an edge in $\mathcal{AC}(N)$.
On the other hand, for any $b\in \mathcal{A}^{(0)}(N)$, we connect it to $\eta*a'*\eta ^{-1}$ in $\mathcal{A}(N)$ by a unicorn path in $P(\eta*a'*\eta ^{-1}, b)$.
Therefore, we can connect an arbitrary $a\in \mathcal{C}^{(0)}(N)$ and an arbitrary $b\in \mathcal{A}^{(0)}(N)$ by an edge-path in $\mathcal{AC}(N)$.
\end{proof}

Now, we give a proof of Theorem~\ref{mainthm}.

\begin{proof}[Proof of Theorem~\ref{mainthm}]
First we assume that $\partial N\not=\emptyset$. 
We take any geodesic triangle $T=abd$ in $\mathcal{C}^{(0)}(N)$, where $a, b, d\in \mathcal{C}^{(0)}(N)$. 
Let $\bar{a}$, $\bar{b}$, and $\bar{d}$ $\in \mathcal{A}^{(0)}(N)$ be three arcs which are adjacent to $a$, $b$, and $d$ in $\mathcal{AC}(N)$ respectively.
we choose one of the endpoints $\alpha$, $\beta$, and $\delta$ of $\bar{a}$, $\bar{b}$, and $\bar{d}$.
Now we prove the following proposition.

\vspace{0.1in}

\begin{prop}~\label{k8}
Let $a$, $b$ be vertices of $\mathcal{C}(N)$, and $\bar{a}$, $\bar{b}$ vertices of $\mathcal{A}(N)$ which are adjacent to $a$, $b$, respectively.
Let $\mathcal{G}=ab$ be a geodesic connecting $a$ and $b$ in $\mathcal{C}(N)$.
Then, any unicorn arc $\bar{c}\in \mathcal{P}\in P(\bar{a}, \bar{b})$ is at distance $\leq 8$ from $\mathcal{G}$.
\end{prop}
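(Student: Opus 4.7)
The plan is to mirror the proof of Proposition~\ref{distances_between_unipaths_and_geodesic} in the arc-curve setting, replacing $d_{\mathcal{A}}$ by $d_{\mathcal{AC}}$ for the target geodesic while keeping $d_{\mathcal{A}}$ along the unicorn path. Fix $\mathcal{P}\in P(\bar{a},\bar{b})$ and let $\bar{c}\in\mathcal{P}$ realize the maximum $k:=d_{\mathcal{AC}}(\bar{c},\mathcal{G})$; assume $k\geq 1$, the goal being to force $k\leq 8$. As before, take the maximal subpath $[\bar{a}',\bar{b}']\subset\mathcal{P}$ containing $\bar{c}$ with $d_{\mathcal{A}}(\bar{c},\bar{a}')\leq 2k$ and $d_{\mathcal{A}}(\bar{c},\bar{b}')\leq 2k$, and dichotomize via Lemma~\ref{key3}. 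In the degenerate case $[\bar{a}',\bar{b}']=(\bar{a},\bar{c},\bar{b})$ with $\bar{a},\bar{b}$ adjacent in $\mathcal{A}(N)$, the arc $\bar{c}$ is $\mathcal{A}$-adjacent to $\bar{a}$, which in turn is $\mathcal{AC}$-adjacent to $a\in\mathcal{G}$, yielding $k\leq 2$.

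In the main case, choose $a'',b''\in\mathcal{G}$ realizing the $d_{\mathcal{AC}}$-distances to $\bar{a}',\bar{b}'$, so that $d_{\mathcal{AC}}(\bar{a}',a'')\leq k$ and $d_{\mathcal{AC}}(\bar{b}',b'')\leq k$. The triangle inequality gives
\[d_{\mathcal{AC}}(a'',b'')\leq k+2k+2k+k=6k,\]
and the $2$-Lipschitz retraction from Lemma~\ref{r_2-Lipschitz} upgrades this to $d_{\mathcal{C}}(a'',b'')\leq 12k$. Hence the subsegment of $\mathcal{G}$ between $a''$ and $b''$ has length at most $12k$ in $\mathcal{C}(N)$, and splicing $\mathcal{AC}$-geodesics $\bar{a}'\to a''$ and $b''\to\bar{b}'$ with this $\mathcal{G}$-segment produces a path in $\mathcal{AC}(N)$ of length at most $14k$ from $\bar{a}'$ to $\bar{b}'$.

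The technical core is to convert this $\mathcal{AC}$-path, which mixes arcs and curves, into a sequence of \emph{arcs} $\bar{a}'=y_0,y_1,\ldots,y_N=\bar{b}'$ with consecutive $y_i,y_{i+1}$ at universally bounded $d_{\mathcal{A}}$-distance; this keeps $N$ linear in $k$. One replaces each curve vertex $x$ by an arc $\bar{x}$ disjoint from $x$, chosen compatibly on adjacent curves along $\mathcal{G}$. Once this arc sequence is in hand, Lemma~\ref{a} applied to $(y_i)$ and $\bar{c}\in\mathcal{P}$ produces some index $i$ and an arc $c^{*}\in\mathcal{P}^{*}\in P(y_i,y_{i+1})$ with $d_{\mathcal{A}}(\bar{c},c^{*})\leq\lceil\log_2 N\rceil$. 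Rerunning the endpoint argument from Proposition~\ref{distances_between_unipaths_and_geodesic} (using $d_{\mathcal{AC}}\leq d_{\mathcal{A}}$ on arcs) shows $d_{\mathcal{AC}}(\bar{c},y_i)\geq k$, yielding the self-referential bound $k\leq\lceil\log_2(Ck)\rceil$ for a universal constant $C$, which forces $k\leq 8$ for appropriately tracked constants.

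The main obstacle will be the conversion step: for each pair of adjacent curves $x,y$ on $\mathcal{G}$, one must show that suitably chosen adjacent arcs $\bar{x},\bar{y}$ remain at bounded $d_{\mathcal{A}}$-distance. This requires a case analysis of the topological type of a regular neighborhood of $x\cup y\cup\partial N$ on $N$, essentially paralleling the arguments used in Lemma~\ref{r_2-Lipschitz}, with separate treatment of exceptional low-complexity configurations (compare the $(g,n)=(3,1)$ case there). Controlling the overhead constant in this conversion is precisely what produces the bound $8$ rather than the bound $6$ obtained for geodesics in $\mathcal{A}(N)$.
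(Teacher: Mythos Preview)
Your overall architecture matches the paper's proof exactly: take $\bar c$ at maximal $d_{\mathcal{AC}}$-distance $k$ from $\mathcal G$, extract the subpath $[\bar a',\bar b']$ with the $2k$ constraints, project its endpoints to $a'',b''\in\mathcal G$, use the $2$-Lipschitz retraction of Lemma~\ref{r_2-Lipschitz} to bound the $\mathcal G$-segment by $12k$, splice to get a path of length $m\le 14k$, and finish via Lemma~\ref{a}. That skeleton is exactly right.

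Where you diverge from the paper is in the conversion step, and here your proposal both overcomplicates matters and contains a genuine gap. You assert that one must arrange the replacement arcs $y_i$ so that consecutive ones lie at bounded $d_{\mathcal A}$-distance, and that this requires a topological case analysis paralleling Lemma~\ref{r_2-Lipschitz}. But even granting such a bound, your closing step does not go through: Lemma~\ref{a} hands you a unicorn arc $c^*\in P(y_i,y_{i+1})$, not $y_i$ itself, and a unicorn arc between two nearby arcs need not be close to either endpoint. So the inequality $d_{\mathcal{AC}}(\bar c,y_i)\ge k$ does not combine with $d_{\mathcal A}(\bar c,c^*)\le\lceil\log_2 N\rceil$ to give your self-referential bound without a further additive loss; and if you instead subdivide the $y$-sequence into an honest $\mathcal A$-path, the inserted vertices have no a~priori lower bound on their $d_{\mathcal{AC}}$-distance to $\bar c$. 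Either way you lose the constant $8$.

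The paper's trick is much simpler and costs only a single $+1$. For each $i$ choose $\overline{x_i}\in\mathcal A^{(0)}(N)$ adjacent in $\mathcal{AC}(N)$ to \emph{both} $x_i$ and $x_{i+1}$ (or equal to one of them when that one is already an arc); this is immediate since $x_i,x_{i+1}$ are disjoint and $\partial N\neq\emptyset$. No control on $d_{\mathcal A}(\overline{x_i},\overline{x_{i+1}})$ is needed, because any unicorn arc $c^*\in P(\overline{x_i},\overline{x_{i+1}})$ is built from subarcs of $\overline{x_i}$ and $\overline{x_{i+1}}$, both disjoint from $x_{i+1}$; hence $c^*$ itself is disjoint from $x_{i+1}$, i.e.\ $d_{\mathcal{AC}}(c^*,x_{i+1})\le 1$. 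Now Lemma~\ref{a} applied to the length-$m$ sequence $(\overline{x_i})$ gives $d_{\mathcal{AC}}(\bar c,x_{i+1})\le \lceil\log_2 14k\rceil+1$, while the standard endpoint argument (applied to the original vertices $x_{i+1}$, which \emph{do} lie on the spliced path) gives $d_{\mathcal{AC}}(\bar c,x_{i+1})\ge k$. Thus $k\le\lceil\log_2 14k\rceil+1$, which forces $k\le 8$. No case analysis, and the constant comes out exactly.
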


\begin{proof}
Fix an arbitrary unicorn path $\mathcal{P}\in P(\bar{a}, \bar{b})$.
Let $\bar{c}\in \mathcal{P}$ be at maximal distance $k$ from $\mathcal{G}$.
Assume that $k\geq 1$. Similarly to the proof of Proposition~\ref{distances_between_unipaths_and_geodesic}, we take the maximal subpath $[\bar{a}', \bar{b}']\in \mathcal{P}$ which fills the conditions $\bar{c}\in [\bar{a}', \bar{b}']$, $d_{\mathcal{AC}}(\bar{a}', \bar{c})\leq 2k$, and $d_{\mathcal{AC}}(\bar{b}', \bar{c})\leq 2k$. Let $a''$ and $b''$ be the closest vertices of $\mathcal{G}$ to $\bar{a}'$ and $\bar{b}'$ in $\mathcal{AC}(N)$. 

Then, 
\begin{align*}
d_{\mathcal{AC}}(a'', b'') &\leq d_{\mathcal{AC}}(a'', \bar{a}')+d_{\mathcal{AC}}(\bar{a}', \bar{c})+d_{\mathcal{AC}}(\bar{c}, \bar{b}')+d_{\mathcal{AC}}(\bar{b}', b'')\\
&\leq k+2k+2k+k\\
&= 6k.
\end{align*}

Let $[a'', b'']_{\mathcal{G}}$ be the subpath of $\mathcal{G}$ connecting $a''$ and $b''$.
By the inequality $d_{\mathcal{AC}}(a'', b'')\leq 6$ and Lemma~\ref{r_2-Lipschitz}, the length of $[a'', b'']_{\mathcal{G}}$ in $\mathcal{C}(N)$ is at most $12k$.
Let $\bar{a}'a''$ and $b''\bar{b}'$ be geodesics in $\mathcal{AC}(N)$ connecting $\bar{a}'$ and $a''$, and $b''$ and $\bar{b}'$.
Then $d_{\mathcal{AC}}(\bar{a}', a'')\leq k$ and $d_{\mathcal{AC}}(\bar{b}', b'')\leq k$.
The length of  $\bar{a}'a''\cup [a'', b'']_{\mathcal{G}}\cup b''\bar{b}'$ is at most $14k$. 
Let $\{x_{i}\}_{i=0}^{m}$ be the sequence of the vertices of $\bar{a}'a''\cup [a'', b'']_{\mathcal{G}}\cup b''\bar{b}'$, where $x_{i}$ is adjacent to $x_{i+1}$ for each $i=0,\ldots, m-1$, and $x_{0}=\bar{a}'$, $x_{m}=\bar{b}'$.
Then, we get $m\leq 14k$.
Furthermore, for any $i=1,\ldots, m-1$, there exists a vertex $\overline{x_{i}}\in \mathcal{A}^{(0)}(N)$ which is adjacent to both $x_{i}$ and $x_{i+1}$ or equal to either $x_{i}$ or $x_{i+1}$.
We set $\overline{x_{0}}=\bar{a}'$ and $\overline{x_{m}}=\bar{b}'$.
Then $\{ \overline{x_{i}} \}_{i=0}^{m}$ is a sequence of vertices of $\mathcal{A}(N)$, where $m\leq 14k$.
By Lemma~\ref{a}, for $\bar{c}\in \mathcal{P}(\bar{a}'^{\alpha}, \bar{b}'^{\beta})$, there exist $0\leq i<m$ and $c^{*}\in \mathcal{P}^{*}\in P(\overline{x_{i}}, \overline{x_{i+1}})$ such that $d_{\mathcal{AC}}(\bar{c}, c^{*})\leq \lceil \log_2 14k \rceil$.
Note that all unicorn arcs of unicorn paths in \\
$P(\overline{x_{i}}, \overline{x_{i+1}})$ are disjoint from $x_{i+1}$. 

Then, we get 
\begin{align*}
d_{\mathcal{AC}}(\bar{c}, x_{i+1}) &\leq d_{\mathcal{AC}}(\bar{c}, c^{*})+d_{\mathcal{AC}}(c^{*}, x_{i+1})\\
&\leq d_{\mathcal{A}}(\bar{c}, c^{*})+d_{\mathcal{AC}}(c^{*}, x_{i+1})\\
&\leq \lceil \log_2 14k \rceil+1.
\end{align*}

For this $x_{i+1}\in \mathcal{AC}^{(0)}(N)$, we claim that $d_{\mathcal{AC}}(\bar{c}, x_{i+1})\geq k$.
Indeed, if $x_{i+1}\in [a'', b'']_{\mathcal{G}}\subset \mathcal{G}$, then $d_{\mathcal{AC}}(\bar{c}, x_{i+1})\geq d_{\mathcal{AC}}(\bar{c}, \mathcal{G})=k$. 
If $x_{i+1}\not\in [a'', b'']_{\mathcal{G}}$ and $x_{i+1}\in \bar{a}'a''$, then $d_{\mathcal{AC}}(\bar{c}, \bar{a}')=2k$, since $\bar{a}'\not=\bar{a}$.
Thus 
\begin{align*}
d_{\mathcal{AC}}(\bar{c}, x_{i+1})&\geq  d_{\mathcal{AC}}(\bar{c}, \bar{a}')-d_{\mathcal{AC}}(\bar{a}', \bar{c})\\
&\geq 2k-k=k.
\end{align*}
If $x_{i+1}\not\in [a'', b'']_{\mathcal{G}}$ and $x_{i+1}\in b''\bar{b}'$, then we also get $d_{\mathcal{AC}}(c, x_{i})\geq k$.

Therefore, we get $k\leq \lceil \log_2 14k \rceil+1$, and so $k\leq 8$.
\end{proof}

Now, we go back to the proof of Theorem~\ref{mainthm}.
Let $ab$ be the side of $T$ connecting $a$ and $b$ in $\mathcal{C}(N)$.
From Lemma~\ref{key2}, there exist $c_{\bar{a}\bar{b}}\in\mathcal{P}(\bar{a}^{\alpha}, \bar{b}^{\beta })$, $c_{\bar{b}\bar{d}}\in\mathcal{P}(\bar{b}^{\beta }, \bar{d}^{\delta })$, and $c_{\bar{d}\bar{a}}\in\mathcal{P}(\bar{d}^{\delta }, \bar{a}^{\alpha})$ such that each pair represents adjacent vertices of $\mathcal{A}(N)$. 
By Proposition~\ref{k8}, the vertex $c_{\bar{a}\bar{b}}$ of $\mathcal{AC}(N)$ is a $9$-center of T.
In particular, $c_{\bar{a}\bar{b}}$ is at distance $\leq 8$ from a vertex of $\mathcal{G}=ab$, which is a curve (see Figure~\ref{fig_mainthm}). 
We connect this vertex with $c_{\bar{a}\bar{b}}$ by a geodesic in $\mathcal{AC}(N)$ and call the intermediate vertices $c^{i}$.
Now, we assume that the worst case, that is, the case where there are eight of them. 
We consider $r(c_{\bar{a}\bar{b}}), r(c^{1}),\ldots, r(c^{8})$, where $r$ is the retraction defined at the beginning of Section~\ref{Curve graphs are hyperbolic}.
By Lemma~\ref{r_2-Lipschitz}, the distance between $r(c_{\bar{a}\bar{b}})$ and $r(c^{1})$ is at most $2$, and the distance between $r(c^{i})$ and $r(c^{i+1})$ for each $i=1,\ldots, 6$ is also at most $2$.
Since the vertices on $\mathcal{G}$ are curves, $r(c^{7})$ is adjacent to $r(c^8)$ (see Cases 1 and 2 in the proof of Lemma~\ref{r_2-Lipschitz}).
Hence, $d_{\mathcal{C}}(r(c_{\bar{a}\bar{b}}), r(c^{8}))\leq 15$.
By a similar argument to the one for $\mathcal{G}=ab$ and $c_{\bar{a}\bar{b}}$, we get  $d_{\mathcal{C}}(r(c_{\bar{b}\bar{d}}), bd)\leq 15$ and  $d_{\mathcal{C}}(r(c_{\bar{d}\bar{a}}), da)\leq 15$, where $bd$ and $da$ are the sides of $T$ connecting $b$ and $d$, and $d$ and $a$.
Since $d_{\mathcal{C}}(r(c_{\bar{a}\bar{b}}), r(c_{\bar{b}\bar{d}}))\leq 2$ and $d_{\mathcal{C}}(r(c_{\bar{a}\bar{b}}), r(c_{\bar{d}\bar{a}}))\leq 2$,  the vertex $r(c_{\bar{a}\bar{b}})\in \mathcal{C}^{(0)}(N)$ becomes a $17$-center of the triangle $T$ in $\mathcal{C}(N)$.

\begin{figure}[h]
\includegraphics[scale=1.0]{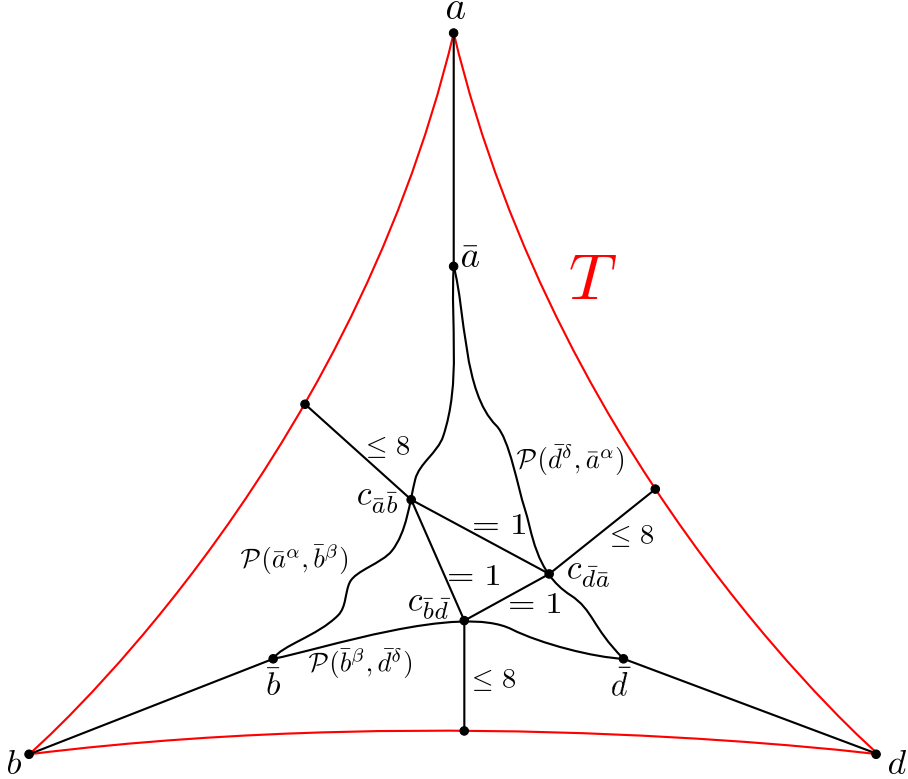}
\caption{$abd$ has a 9-center $c_{\bar{a}\bar{b}}$ in $\mathcal{AC}(N)$.}\label{fig_mainthm}
\end{figure}

Secondly, we assume that $\partial N=\emptyset$.
Note that $N$ has a negative Euler characteristic, since the genus of $N$ is at least $3$.
Let $\overline{N}$ be a surface obtained from $N$ by removing an open disk.
In this proof, we denote by $d_{\mathcal{C}(N)}(\cdot, \cdot)$ and $d_{\mathcal{C}(\overline{N})}(\cdot, \cdot)$ the distances in $\mathcal{C}(N)$ and $\mathcal{C}(\overline{N})$. 
We define a retraction Ret$\colon \mathcal{C}(\overline{N})\rightarrow \mathcal{C}(N)$ as follows: for any $\alpha\in \mathcal{C}(\overline{N})$, Ret$(\alpha)$ is a homotopy class of $\alpha$ in $\mathcal{C}(N)$.
Then Ret is $1$-Lipschitz.
We also define a section Sec$\colon \mathcal{C}(N)\rightarrow \mathcal{C}(\overline{N})$ as follows. 
Choose a hyperbolic metric on $N$.
For any $\alpha\in \mathcal{C}(N)$, we take a geodesic (now we call it $\alpha$) as the representative of $\alpha$.
Then remove $p\in N\setminus \bigcup _{\lambda \in \Lambda } c_{\lambda }$, where each $c_{\lambda }$ is a geodesic on $N$, identify $N- \{p \}$ with $\overline{N}$, and consider Sec$(\alpha)$ as $\alpha$ on $\overline{N}$.
Note that the composition Ret$\circ $Sec is identity on $\mathcal{C}(N)$.

Let $T=abd$ be any geodesic triangle in $\mathcal{C}(N)$, where $a$, $b$, and $d$ are vertices of $\mathcal{C}(N)$.
Since Sec is an embedding map, Sec$(T)=T$ has a $17$-center $q\in \mathcal{C}^{(0)}(\overline{N})$ in $\mathcal{C}(\overline{N})$. 
Let $ab$, $bd$, and $da$ be the sides of $T$ connecting $a$ and $b$, $b$ and $d$, and $d$ and $a$ in $\mathcal{C}(N)$.
Then, for $ab$, we get
\begin{align*}
d_{\mathcal{C}(N)}({\rm Ret}(q), ab) &= d_{\mathcal{C}(N)}{\big(}{\rm Ret}(q), ({\rm Ret}\circ {\rm Sec}(a))({\rm Ret}\circ {\rm Sec}(b)){\big)}\\
&\leq d_{\mathcal{C}(\overline{N})}(q, {\rm Sec}(a){\rm Sec}(b))\\
&\leq 17,
\end{align*}\\
where (Ret$\circ $Sec($a$))(Ret$\circ $Sec($b$)) is a geodesic in $\mathcal{C}(N)$ connecting Ret$\circ $Sec($a$) and Ret$\circ $Sec($b$), and Sec($a$)Sec($b$) is a geodesic in $\mathcal{C}(\overline{N})$ connecting Sec($a$) and Sec($b$).
For $bd$ and $da$, we can show the same results that we showed for $ab$.
Hence, Ret$(q)$ is a $17$-center of $T$ in $\mathcal{C}(N)$.
\end{proof}

\section{Arc-curve graphs are uniformly hyperbolic}\label{Arc-curve graphs are hyperbolic}

\begin{figure}[h]
\includegraphics[scale=1.0]{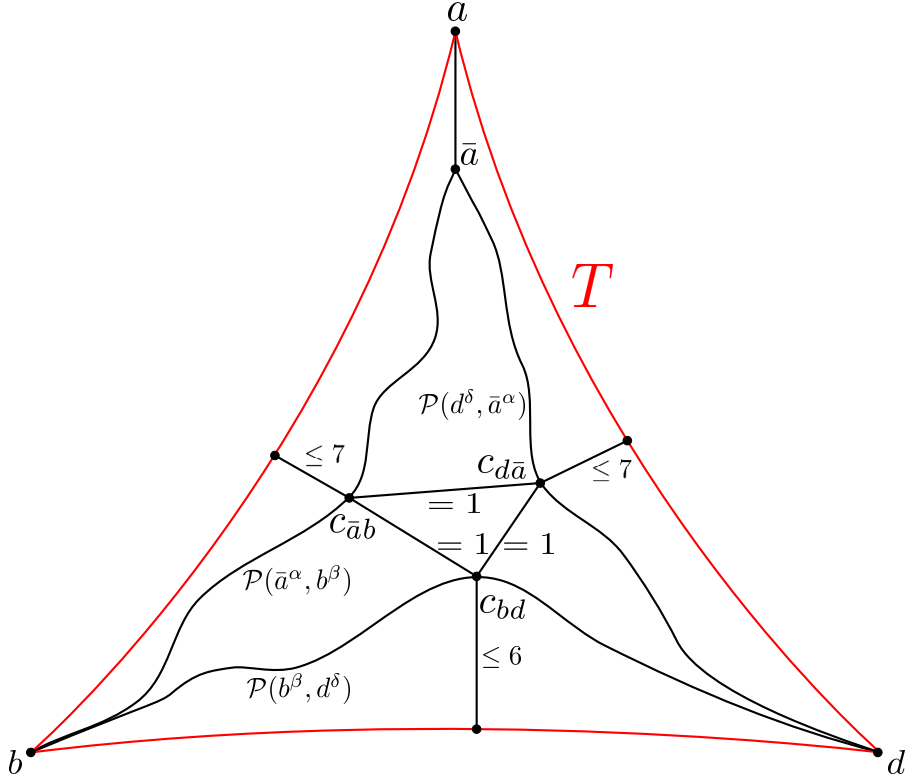}
\caption{$abd$ has an $8$-center $c_{\bar{a}b}$ in $\mathcal{AC}(N)$ ($a\in \mathcal{C}^{(0)}(N)$, $b, d\in \mathcal{A}^{(0)}(N)$).}\label{fig_third_thm}
\end{figure}

\begin{proof}[Proof of Theorem~\ref{third_thm}]
Fix any geodesic triangle $T=abd$ in $\mathcal{AC}(N)$, where $a$, $b$, and $d$ are vertices of $\mathcal{AC}(N)$.

If $a, b, d\in \mathcal{A}^{(0)}(N)$, then $T$ has a $7$-center in $\mathcal{A}(N)$ by Theorem~\ref{second_thm}.
Hence $T$ has a $7$-center in $\mathcal{AC}(N)$.

If $a, b, d\in \mathcal{C}^{(0)}(N)$, then $T$ has a $9$-center in $\mathcal{AC}(N)$ by the proof of Theorem~\ref{mainthm}.

If $a\in \mathcal{C}^{(0)}(N)$ and $b, d\in \mathcal{A}^{(0)}(N)$, then we take $\bar{a}\in \mathcal{A}^{(0)}(N)$ which is adjacent to $a$ in $\mathcal{AC}(N)$.
Similarly to Proposition~\ref{k8}, we can prove the following proposition.
 
\vspace{0.1in}
\begin{prop}~\label{prop_in_third_thm}
Let $a$ be a vertex of $\mathcal{C}(N)$, $b$ a vertex of $\mathcal{A}(N)$, and $\bar{a}$ a vertex of $\mathcal{A}(N)$ which is adjacent to $a$ in $\mathcal{AC}(N)$.
Let $\mathcal{G}=ab$ be a geodesic connecting $a$ and $b$ in $\mathcal{AC}(N)$.
Then, any unicorn arc $\bar{c}\in \mathcal{P}\in P(\bar{a}, b)$ is at distance $\leq 7$ from $\mathcal{G}$.
\end{prop}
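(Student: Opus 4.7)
The plan is to mimic the proof of Proposition~\ref{k8} almost verbatim, with two simplifications that together shave the constant from $8$ down to $7$. First, since $b$ is already a vertex of $\mathcal{A}(N)$, we work directly with unicorn paths $\mathcal{P}\in P(\bar{a},b)$ and do not introduce any auxiliary arc on the $b$-side. Second, and more importantly, because $\mathcal{G}$ is a geodesic in $\mathcal{AC}(N)$ itself, the length of any subpath of $\mathcal{G}$ in $\mathcal{AC}(N)$ equals the $\mathcal{AC}$-distance of its endpoints; no $2$-Lipschitz retraction $r$ is needed, and this is precisely what produced the inflated estimate of $12k$ in the proof of Proposition~\ref{k8}.

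Concretely, I will fix $\mathcal{P}\in P(\bar{a},b)$ and let $\bar{c}\in\mathcal{P}$ realize the maximal $\mathcal{AC}$-distance $k$ to $\mathcal{G}$, assuming $k\geq 1$. Following the template of Proposition~\ref{k8}, I choose the maximal subpath $[\bar{a}',b']\subset\mathcal{P}$ with $\bar{c}\in[\bar{a}',b']$ and $d_{\mathcal{AC}}(\bar{a}',\bar{c}),d_{\mathcal{AC}}(b',\bar{c})\leq 2k$, let $a'',b''\in\mathcal{G}$ be the vertices closest to $\bar{a}',b'$ in $\mathcal{AC}(N)$, and deduce $d_{\mathcal{AC}}(a'',b'')\leq 6k$ by the triangle inequality. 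The key gain appears here: the subpath $[a'',b'']_{\mathcal{G}}$ has $\mathcal{AC}$-length at most $6k$ (not $12k$), so the concatenation $\bar{a}'a''\cup[a'',b'']_{\mathcal{G}}\cup b''b'$ is an $\mathcal{AC}$-edge-path of length $m\leq 8k$.

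Next I will lift the vertex sequence $x_{0}=\bar{a}',\ldots,x_{m}=b'$ of this path to a sequence $\overline{x_{0}},\ldots,\overline{x_{m}}$ in $\mathcal{A}^{(0)}(N)$ exactly as in Proposition~\ref{k8}: for each consecutive pair $(x_{i},x_{i+1})$, pick $\overline{x_{i}}$ either equal to one of them when it is an arc, or as an essential arc disjoint from both when both are curves. Applying Lemma~\ref{a} to this sequence of length $\leq 8k$ yields indices and a unicorn arc $c^{*}\in \mathcal{P}^{*}\in P(\overline{x_{i}},\overline{x_{i+1}})$ with $d_{\mathcal{A}}(\bar{c},c^{*})\leq\lceil\log_{2}8k\rceil$. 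Since every unicorn arc in $\mathcal{P}^{*}$ is disjoint from $x_{i+1}$ by the construction of $\overline{x_{i}},\overline{x_{i+1}}$, this upgrades to $d_{\mathcal{AC}}(\bar{c},x_{i+1})\leq\lceil\log_{2}8k\rceil+1$.

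Finally the maximality of $k$ and of the subpath $[\bar{a}',b']$ force $d_{\mathcal{AC}}(\bar{c},x_{i+1})\geq k$ by the same three-case analysis as in Proposition~\ref{k8} (according to whether $x_{i+1}$ lies on $[a'',b'']_{\mathcal{G}}$, on $\bar{a}'a''$, or on $b''b'$), and then $k\leq\lceil\log_{2}8k\rceil+1$; a direct check shows this inequality fails for $k\geq 8$ (as $\lceil\log_{2}64\rceil+1=7<8$) but holds at $k=7$ (as $\lceil\log_{2}56\rceil+1=7$), so $k\leq 7$. The main obstacle I foresee is not the numerical estimate but the bookkeeping of the lift $\{\overline{x_{i}}\}$: one must verify, case by case as in the proof of Lemma~\ref{r_2-Lipschitz}, that an essential arc disjoint from both $x_{i}$ and $x_{i+1}$ actually exists, and also handle the mild edge cases where $\bar{a}'=\bar{a}$ or $b'=b$ (treated by the degenerate short-path argument from the proof of Proposition~\ref{distances_between_unipaths_and_geodesic}).
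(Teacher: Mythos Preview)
Your proposal is correct and follows exactly the approach the paper intends: the paper itself gives no separate proof of this proposition, only the remark ``Similarly to Proposition~\ref{k8}'', and your argument is precisely that adaptation. The key observation you make---that $\mathcal{G}$ is now a geodesic in $\mathcal{AC}(N)$, so $[a'',b'']_{\mathcal{G}}$ has length at most $6k$ rather than $12k$, yielding $m\leq 8k$ and hence $k\leq \lceil\log_{2}8k\rceil+1\leq 7$---is exactly the point that distinguishes this proposition from Proposition~\ref{k8}.
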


By Lemma~\ref{key2}, for $\bar{a}, b, d\in \mathcal{A}^{(0)}(N)$, there exist $c_{\bar{a}b}\in\mathcal{P}(\bar{a}^{\alpha}, b^{\beta} )$,  $c_{bd}\in\mathcal{P}(b^{\beta}, d^{\delta})$, and $c_{d\bar{a}}\in\mathcal{P}(d^{\delta }, \bar{a}^{\alpha} )$ such that each pair represents adjacent vertices of $\mathcal{A}(N)$.
Then, $c_{\bar{a}b}$ is an $8$-center of $T$ in $\mathcal{AC}(N)$ by Proposition~\ref{prop_in_third_thm} (see Figure~\ref{fig_third_thm}).

If $a, b\in \mathcal{C}^{(0)}(N)$ and $d\in \mathcal{A}^{(0)}(N)$, then $T$ has an $8$-center in $\mathcal{AC}(N)$ by a similar argument to that of the case where $a\in \mathcal{C}^{(0)}(N)$ and $b, d\in \mathcal{A}^{(0)}(N)$.

From above four cases, $\mathcal{AC}(N)$ is $9$-hyperbolic.
\end{proof}

Finally, we prove Theorem~\ref{fourth_thm}.

\begin{proof}[Proof of Theorem~\ref{fourth_thm}]
Fix any geodesic triangle $T=abd$ in $\mathcal{AC}(S)$, where $a$, $b$, and $d$ are vertices of $\mathcal{AC}(S)$.

If $a, b, d\in \mathcal{A}^{(0)}(S)$, then $T$ has a $7$-center in $\mathcal{A}(S)$ by~\cite[Theorem 1.2]{HPW}.
Hence $T$ has a $7$-center in $\mathcal{AC}(S)$.

If $a, b, d\in \mathcal{C}^{(0)}(S)$, then $T$ has a $9$-center in $\mathcal{AC}(S)$ by the proof of~\cite[Theorem 1.1]{HPW}.

If $a\in \mathcal{C}^{(0)}(S)$ and $b, d\in \mathcal{A}^{(0)}(S)$, then we can show that $T$ has an $8$-center in $\mathcal{AC}(S)$ by the same argument that we gave in the proof of Theorem~\ref{third_thm} (see the same case in the proof of Theorem~\ref{third_thm}).

If $a, b\in \mathcal{C}^{(0)}(S)$ and $d$ $\in \mathcal{A}^{(0)}(S)$, then $T$ also has an $8$-center in $\mathcal{AC}(S)$ (see the same case in the proof of Theorem~\ref{third_thm}).

From above four cases, $\mathcal{AC}(S)$ is $9$-hyperbolic.
\end{proof}

\par
{\bf Acknowledgements: } The author would like to express her sincere gratitude to Eiko Kin for her encouragement and invaluable and helpful advices. The author also wish to thank Susumu Hirose, Hideki Miyachi, and Ken'ichi Ohshika for their meticulous comments and advices.


\end{document}